\documentclass[12pt,pdftex]{amsart} 

\usepackage{0header}

\title[The Cartan-Hadamard conjecture for small volumes under Ricci bound]{A proof of the Cartan-Hadamard conjecture for small volumes under a Ricci curvature lower bound}

\author[M. Agnoletto]{M. Agnoletto}
\address{\textsc{Marcos Agnoletto}\newline
\indent CMCC, Federal University of ABC, Santo André, SP, Brazil}
\email{marcos.forte@ufabc.edu.br}

\author[M. F. Da Silva]{M. F. Da Silva}
\address{\textsc{M\'{a}rcio Fabiano Da Silva}\newline
\indent CMCC, Federal University of ABC, Santo André, SP, Brazil}
\email{marcio.silva@ufabc.edu.br.}

\author[S. Nardulli]{S. Nardulli}
\address{\textsc{Stefano Nardulli}\newline
\indent CMCC, Federal University of ABC, Santo André, SP, Brazil}
\email{stefano.nardulli@ufabc.edu.br}

\author[R. Resende]{R. Resende}
\address{\textsc{Reinaldo Resende}\newline
\indent Florida State University}
\email{rresende@fsu.edu}

 \date{}
\begin{document}

\date{}

\begin{abstract}
We prove the generalized Cartan--Hadamard conjecture, also known as the Aubin conjecture, in the small volume regime under a lower Ricci curvature bound, for any dimension $n\geq 2$. More precisely, we show that if $(M^n,g)$ is a Cartan-Hadamard manifold satisfying $\Sec_g\leq\bar k\leq 0$ and $\Ric_g\geq (n-1)\underline k$, then its isoperimetric profile is at least that of the model space of curvature $\bar k$ for small enough volumes.

We also reduce the proof of the generalized Cartan-Hadamard conjecture to establishing rigidity in the equality case.
\end{abstract}

\maketitle

\setcounter{tocdepth}{1}
{  \hypersetup{linkcolor=black}
  \tableofcontents
}

\section{Introduction}

A Cartan-Hadamard manifold $(M^n,g)$ is a complete, simply connected Riemannian manifold with nonpositive sectional curvature. The Cartan-Hadamard conjecture, also known as the Aubin conjecture, asserts that enclosing a given volume in a Cartan-Hadamard manifold requires at least as much perimeter as in the Euclidean space; see Theorem \ref{thm: main theorem}. Despite its extremely simple statement, the conjecture remains open in arbitrary dimensions. In dimensions $n=2, 3$, and $4$, the conjecture was settled by Weil \cite{Weil1926}, Kleiner \cite{kleiner1992isoperimetric}, and Croke \cite{Croke1984}, respectively.

More generally, the \emph{generalized} Cartan-Hadamard conjecture states that, if $\Sec_g \leq \bar k \leq 0$, then enclosing a given volume in $(M^n,g)$ requires at least as much perimeter as in the model space of curvature $\bar k$. Concerning the \emph{generalized} Cartan--Hadamard conjecture, the case $n=2$ was settled by Bol \cite{Bol1941}, while Kleiner already proved the generalized conjecture in dimension $n=3$ \cite{kleiner1992isoperimetric}. The case $n\geq 4$ remains open. 

In this paper, we prove the generalized Cartan-Hadamard conjecture in every dimension $n\geq 2$ for small volumes. The only additional hypothesis we impose is a lower bound on the Ricci curvature. In this setting, our results are new even for the ``non-generalized'' version of the conjecture.

In order to state our main result, we let $(\mathbb{M}^n_b,\delta^b)$ denote the $n$-dimensional simply connected model space form of constant sectional curvature $b\in\mathbb{R}$ and the isoperimetric profile of $(M^n, g)$ by $\ip{M^n}{g}$. We refer the reader to Section \ref{sec:Notation} (see \eqref{eq:IsoperimetricProfileDfefinition}) for the precise definition.

\begin{theorem}\label{thm: main theorem}
Let $(M^n, g)$ be a Cartan-Hadamard manifold with $\Sec_g \leq\bar k \leq 0$ and $\mathrm{Ric}_g \geq (n-1)\underline k $. Then there exists $v_0 = v_0(\underline k,\bar k, n)>0$ such that 
\begin{equation}\label{eqn: conjecture}
    \ip{M^n}{g}(v)\geq \ipm{\bar k}(v)\text{ for any }v\in (0,v_0].
\end{equation}
Moreover, if there exists a finite perimeter set $E\subset M$ with $\vol_g(E) = v\in (0,v_0]$ and $P_g(E) = \ipm{\bar k}(v)$, then $E$ is isometric to a geodesic ball in $\mathbb{M}^n_{\bar k}$.
\end{theorem}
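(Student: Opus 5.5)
We argue by contradiction combined with a compactness argument in the small-volume regime, reducing to a comparison for nearly-round isoperimetric regions. Suppose the inequality \eqref{eqn: conjecture} fails: there are Cartan--Hadamard manifolds $(M_j,g_j)$ satisfying $\Sec\le\bar k$ and $\Ric\ge(n-1)\underline k$, and volumes $v_j\to0$ with $\ip{M_j}{g_j}(v_j)<\ipm{\bar k}(v_j)$. Since $M_j$ may be noncompact, isoperimetric regions need not exist. We therefore use the generalized existence theory for manifolds with Ricci bounded below and injectivity radius bounded below. The injectivity radius is infinite here, since $\Sec\le 0$ and $M_j$ is simply connected. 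Together with the two-sided bound on sectional curvature, which follows from $\Sec\le\bar k$ and $\Ric\ge(n-1)\underline k$, this gives bounded geometry. Consequently $\ip{M_j}{g_j}(v_j)$ is attained by a finite perimeter set, possibly in a pointed smooth limit manifold $(M_\infty,g_\infty)$ obtained by Cheeger--Gromov convergence. The limit satisfies the same curvature bounds, and also satisfies $\Sec\le\bar k$ because the convergence is $C^{1,\alpha}$ and in fact smooth away from curvature issues. In this way we obtain isoperimetric regions $\Omega_j$ with $\vol(\Omega_j)=v_j$ and $P(\Omega_j)<\ipm{\bar k}(v_j)$.

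Next we use small-volume regularity. By the results of Nardulli (and Morgan--Johnson type arguments) for bounded geometry, isoperimetric regions of small volume are nearly round balls. More precisely, after rescaling by $r_j\sim v_j^{1/n}$, the boundaries $\partial\Omega_j$ are normal graphs over round spheres with $C^{2,\alpha}$-small graph functions, and the domains lie inside geodesic balls of radius comparable to $r_j$. In particular $\Omega_j$ is connected, has smooth boundary with mean curvature close to $(n-1)/r_j$, and is \emph{star-shaped with respect to some point} $p_j$. It may even be taken to be a nearly umbilic hypersurface.

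The key comparison step is where we expect the main difficulty. We need to show that a smooth region $\Omega$ with connected boundary, contained in a convex ball of a space with $\Sec\le\bar k$, with constant mean curvature $H$ and small diameter, satisfies $P(\Omega)\ge\ipm{\bar k}(\vol\Omega)$. Our plan is Kleiner's strategy: combine a Gauss--Bonnet--Chern/total-curvature inequality with the first variation of the isoperimetric profile. Writing $I=\ip{M}{g}$, the region has constant mean curvature $H=I'(v)$ in the viscosity sense. The aim is to show $\int_{\partial\Omega}H^{n-1}\ge$ the model value $\int_{\partial B^{\bar k}}H_{\bar k}^{n-1}$, which is the comparison proved by Kleiner in dimension $3$ and by Ghomi--Spruck for total positive curvature of convex hypersurfaces. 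For nearly round small hypersurfaces, we aim to obtain this comparison perturbatively instead. Expanding the Gauss equation and using $\Sec\le\bar k$, the Gauss--Kronecker curvature of $\partial\Omega$ is bounded below by that of the comparable model sphere, up to an error of order $r^{2}\cdot\|\mathring{A}\|^2$. Here the almost-umbilicity error $\|\mathring A\|$ is in turn controlled by $\sup|\nabla \mathrm{Rm}|\,r^{3}$. The Ricci lower bound only supplies the uniform constants, via the compactness above.

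We would then integrate the resulting differential inequality $I^{\frac{1}{n-1}}\,(I')\ge$ the model ODE, obtained from $H^{n-1}|\partial\Omega|\ge c_n(\dots)$. Combined with the initial asymptotics $I(v)\sim c_n v^{(n-1)/n}$, which $I$ shares with $\ipm{\bar k}$ as $v\to0$, an ODE comparison gives $I\ge\ipm{\bar k}$ on $(0,v_0]$, contradicting the assumption. For rigidity, equality forces equality in the Gauss-equation estimate at every point. Hence the boundary is umbilic and all sectional curvatures on $\Omega$ equal $\bar k$. The region is then a domain of constant curvature $\bar k$ with umbilic boundary of constant mean curvature, i.e.\ a geodesic ball of $\mathbb M^n_{\bar k}$.
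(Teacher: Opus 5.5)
Your outer architecture matches the paper's: generalized existence under bounded geometry, then Nardulli's small-volume regularity to make $\partial\Omega$ a small normal graph over a geodesic sphere. The key comparison step, however, has a genuine gap.

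\textbf{The total curvature route does not close.} You reduce to a lower bound for $\int_{\partial\Omega}$ of the Gauss--Kronecker curvature (or of $H^{n-1}$), which is Kleiner's route and is essentially the open total curvature conjecture. The Gauss equation only gives $\kappa_i\kappa_j=K^{\partial\Omega}(e_i,e_j)-\Sec_M(e_i,e_j)\geq K^{\partial\Omega}(e_i,e_j)-\bar k$. This is useless without control of the intrinsic curvature of $\partial\Omega$. For $n=3$ Gauss--Bonnet supplies that control. For $n\geq 4$ the only integral identity is Gauss--Bonnet--Chern, and its Pfaffian integrand is not monotone in sectional curvatures. So no lower bound on total curvature follows, perturbatively or otherwise.

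\textbf{The error term cannot be absorbed.} Your ODE comparison needs $I^{1/(n-1)}I'\geq$ model with no loss, so an error of indefinite sign is fatal. For small volumes, $I$ and $\ipm{\bar k}$ agree to leading order, and the next term is governed by the scalar curvature at the concentration point (Druet). When all sectional curvatures there equal $\bar k$, the comparison is decided exactly by the terms you discard.

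\textbf{The hypotheses do not control your error.} Your bound uses $\sup|\nabla\mathrm{Rm}|$, which the assumptions do not bound, so $v_0$ would not depend only on $n,\underline k,\bar k$. It is not even defined on your limit spaces. Under two-sided sectional bounds these limits are only $C^{1,\alpha}$, not smooth, and $\Sec\leq\bar k$ survives only as the $\textup{CAT}(\bar k)$ condition. This is why the paper passes through Alexandrov spaces and smooth approximations.

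\textbf{Rigidity is incomplete.} Equality in the Gauss equation constrains only tangential curvatures at points of $\partial\Omega$. It says nothing about the curvature in the interior of $\Omega$.

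\textbf{The missing idea} is a comparison that uses only the sign $\Sec\leq\bar k$, on the whole region, with no curvature derivatives. The paper obtains it from Hessian comparison. Take polar coordinates about the center $p$ of the geodesic sphere over which $\partial E$ is a $C^1$-small normal, hence radial, graph. Then $g=dr^2+s_{\bar k}(r)^2Q_r$ with $Q_r\geq\sigma$ and $r\mapsto Q_r$ nondecreasing, so $q=\sqrt{\det_\sigma Q_r}\geq 1$ is nondecreasing in $r$.

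The paper compares $E$ with the region $E_{\bar k}\subset\mathbb M^n_{\bar k}$ bounded by the same radial graph. The perimeter excess coming from $q\geq 1$ is used to control the volume excess. Combined with $P(E_{\bar k})\geq\ipm{\bar k}(\vol(E_{\bar k}))$ and the monotonicity, concavity and local Lipschitz bounds of $\ipm{\bar k}$, this yields $P_g(E)\geq\ipm{\bar k}(\vol_g(E))$. In the equality case, $q\equiv 1$ forces $Q_r=\sigma$ along $E$, which gives rigidity directly. No total curvature inequality, Gauss--Bonnet, or ODE integration of the profile is involved.

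You should also handle one further point explicitly. The generalized minimizer may split among several limit spaces. The paper deals with this through strict subadditivity of $\ipm{\bar k}$, and your argument needs the same step.
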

\begin{remark}[Alexandrov spaces]\label{rmk:arbitrary-volumes}
We actually prove Theorem \ref{thm: main theorem} in the more general context of non-collapsing boundaryless Alexandrov spaces with two-sided curvature bounds, cf. Sections \ref{sec:main reduction} and \ref{sec:small volumes}.
\end{remark}

In Section \ref{sec:arbitrary volumes}, we prove Theorem \ref{thm: assuming equality sem Ricci lower bound} below which states the following: if one assumes the rigidity in the equality case (namely Assumption \ref{assump:equality case}), then \eqref{eqn: conjecture} holds for \emph{every volume}. This gives a \emph{major} reduction: to prove the conjecture for arbitrary volumes, it suffices to analyze the equality case under a lower bound on the curvature. The assumption and precise statement are displayed below.

\begin{assumption}[Equality case]\label{assump:equality case} 
Let $(Y,d_Y)$ be an non-collapsing Alexandrov space with no Alexandrov boundary and curvature bounded from above by $A\leq 0$ and from below by $a$. We assume
$$ P_{d_Y}(\Omega) = \ipm{A}(\vol_{d_Y}(\Omega)) \quad \Longrightarrow \quad (\Omega,d_Y) \text{ is isometric to }(B_R^{\delta^A},\delta^A).$$
\end{assumption}

Next, we state our result in the arbitrary volume regime which we prove in Section \ref{sec:arbitrary volumes}.

\begin{theorem}\label{thm: assuming equality sem Ricci lower bound}
Let $(M^n,g)$ be a Cartan-Hadamard manifold with $\Sec_g \leq \bar k \leq 0$. Assume in addition that Assumption \ref{assump:equality case} holds. Then we have
\begin{equation}\label{eqn: conjecture sem Ricci arbitrary vol}
    \ip{M^n}{g}(v)\geq \ipm{\bar k}(v)\text{ for any }v\geq 0.
\end{equation}
Moreover, if there exists a finite perimeter set $E\subset M$ with finite volume $v > 0$ and $P_g(E) = \ipm{\bar k}(v)$, then $E$ is isometric to a geodesic ball in $\mathbb{M}^n_{\bar k}$.
\end{theorem}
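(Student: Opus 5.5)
We argue by continuity in the volume variable. Let
\[
v^* := \sup\{ V>0 : \ip{M^n}{g}(v)\geq \ipm{\bar k}(v) \text{ for all } v\in(0,V]\}.
\]
The first task is to show $v^*>0$; after that we show that $v^*<\infty$ contradicts Assumption~\ref{assump:equality case}.

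\textbf{Step 1: $v^*>0$.} Theorem~\ref{thm: main theorem} needs a Ricci lower bound, which $M$ need not have globally. We therefore do not work on $M$ directly. For a sequence of volumes and almost-minimizers, we pass to limits at infinity: pointed Gromov--Hausdorff limits of $(M,g,p_j)$. Locally near any point the Ricci curvature is bounded below by compactness, but uniformity fails at infinity.

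The approach we would follow is the generalized-existence framework for isoperimetric regions (Nardulli, Antonelli--Fogagnolo--Pozzetta). For fixed volume $v$, a minimizing sequence splits into a part converging in $M$ and parts escaping to infinity. The escaping parts converge to sets in limit spaces $Y$. In the present generality these limits might not exist.

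To recover compactness, first truncate the curvature. Consider $\Sec\leq\bar k$ together with a bound $\Sec\geq a$ on the relevant region, using the density of bounded-geometry approximations. An alternative is to argue with profiles restricted to compact pieces.

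A cleaner route is the following. For each $v$, $\ip{M}{g}(v)$ is approximated by sets $E$ of small diameter relative to volume, or by bounded sets. Any bounded set lies in a compact region $K$ where $\Ric\geq (n-1)\underline k_K$. One then modifies $g$ outside a neighborhood of $K$, keeping $\Sec\leq\bar k$, to obtain a manifold with both curvature bounds. We expect this modification to be delicate.

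\textbf{Step 2: excluding $v^*<\infty$.} Suppose $v^*<\infty$. Then there are volumes $v_j\downarrow v^*$ and sets $E_j$ with
\[
P(E_j)<\ipm{\bar k}(v_j).
\]
By continuity of both profiles, $\ip{M}{g}(v^*)=\ipm{\bar k}(v^*)$. Taking generalized minimizers, split into finitely many pieces $\Omega_i\subset Y_i$, where each $Y_i$ is either $M$ or a limit space. The limits are non-collapsing Alexandrov spaces with curvature $\leq \bar k$: the upper bound passes to limits, and non-collapsing follows from Günther's volume comparison. The total volume is $v^*$.

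Since $\ipm{\bar k}$ is strictly subadditive and each piece satisfies the inequality by the definition of $v^*$, equality forces a single piece $\Omega$. This piece satisfies $P(\Omega)=\ipm{\bar k}(\vol(\Omega))$, so Assumption~\ref{assump:equality case} makes it isometric to a model ball $B_R^{\delta^{\bar k}}$.

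Next, perturb slightly beyond $v^*$. A model ball in $Y$ is a nondegenerate critical point, and the profile of the model is smooth in $v$. The second variation then shows the profile near $v^*$ stays above $\ipm{\bar k}$. More directly, the sets $E_j$ converge to the ball, and the comparison from Theorem~\ref{thm: main theorem} applies locally, since near a model ball there is a two-sided curvature bound. This contradicts $P(E_j)<\ipm{\bar k}(v_j)$, hence $v^*=\infty$.

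\textbf{Rigidity for $E\subset M$.} If $E\subset M$ attains equality, then $E$ is an isoperimetric region in $M$. Assumption~\ref{assump:equality case} applied with $Y=M$ gives the isometry to a model ball.

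The main obstacle is guaranteeing a lower curvature bound in the limit spaces, which Assumption~\ref{assump:equality case} requires. We would first try to exploit the fact that near-equality sets in Cartan--Hadamard manifolds have a uniform inradius and diameter control. We would then apply a local version of the Alexandrov framework on bounded regions only.
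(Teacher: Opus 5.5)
\textbf{There is a genuine gap.} Your architecture matches the paper's Section~\ref{sec:arbitrary volumes}: continuity in $v$, strict subadditivity of $\ipm{\bar k}$ forcing a single limit piece, Assumption~\ref{assump:equality case} identifying that piece as a model ball, then a contradiction. The problem is that you run it on $(M,g)$ itself, where it cannot work.
\begin{itemize}
\item Without a Ricci lower bound there is no generalized existence/compactness theory, and pointed limits of $(M,g,p_j)$ need not exist.
\item Even when limits exist, they carry no lower curvature bound, so Assumption~\ref{assump:equality case} cannot be invoked.
\end{itemize}
You flag this as the main obstacle and leave it open; it is the missing idea, not a technicality.

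The paper resolves it by reversing the order.
\begin{itemize}
\item It first proves the inequality for \emph{all} volumes under $\Sec\le\bar k$ and $\mathrm{Ric}\ge(n-1)\underline k$ (Proposition~\ref{thm: assuming equality in CH}). There compactness holds and the small-volume threshold $v_0(v_{nc},\underline k,\bar k,n)$ is uniform.
\item It then uses that $\ip{M^n}{g}(v)$ is an infimum over bounded smooth sets. For each bounded $\Omega\Subset B^g_R(p)$ it builds a complete Cartan--Hadamard metric $\hat g$ with $\hat g=g$ on $B^g_R(p)$, $\Sec_{\hat g}\le\bar k$ and $\Sec_{\hat g}\ge -C(R)$.
\item The construction is explicit. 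Take $\tilde g=(1-f\circ r)^{-2}g$ on $B^g_{R+1}(p)$, with $f\equiv0$ on $[0,R]$, $f(R+1)=1$ and $f''\ge\sqrt{-\bar k}f'>0$. Hessian comparison gives $\Sec_{\tilde g}\le\bar k$, and $\tilde g$ is pulled back to $M$ by a radial diffeomorphism.
\end{itemize}
Your placement of this modification in Step~1 cannot yield $v^*>0$. The bound $C(R)$, and hence the threshold of Theorem~\ref{thm: main theorem}, depends on where $\Omega$ sits, so there is no uniform small-volume range on $(M,g)$. The modification only pays off when fed into the all-volume statement.

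Two further steps fail as written.
\begin{itemize}
\item \emph{The contradiction step.} Theorem~\ref{thm: main theorem} is a small-volume result and says nothing about $E_j$ with volume near $v^*$. Nondegeneracy of the model ball's second variation in the limit space also gives no comparison with $\ipm{\bar k}(v_j)$ for sets in the original space. The paper proceeds in three moves:
\begin{enumerate}
\item Regularity ($C^{2,\alpha}$ convergence of isoperimetric boundaries to the smooth limit ball) writes $\partial E_j$ as normal graphs over the transplanted model sphere.
\item These are converted into $C^1$-small normal graphs over genuine geodesic spheres $\partial B^{g}_r(p_j)$ (Claim~2, a Jacobi-field/Gr\"onwall comparison of geodesics of the two metrics).
\item Lemma~\ref{lem: isoperimetric inequality for normal graphs} then applies. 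It gives the inequality for $C^1$-small normal graphs over geodesic spheres in any Cartan--Hadamard metric with $\Sec\le\bar k$.
\end{enumerate}
\item \emph{Rigidity.} You apply Assumption~\ref{assump:equality case} with $Y=M$, but $M$ has no lower curvature bound. One first needs $E$ to be bounded (Corollary~\ref{cor: isop region bdd}, which uses the inequality already established). Then one applies the Assumption in $(M,\hat g)$, with $\hat g=g$ on a ball containing $E$.
\end{itemize}
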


\begin{remark}\label{rmk: reduction Ricci}
Notice that Assumption \ref{assump:equality case} is needed at the level of Alexandrov spaces to establish Theorem \ref{thm: assuming equality} for Cartan-Hadamard manifolds. Furthermore, in Assumption \ref{assump:equality case}, the space also admits a \emph{lower} bound on the curvature. The latter fact comes from a reduction argument in Section \ref{sec: Ricci} showing that it suffices to prove the Cartan-Hadamard conjecture under a Ricci lower bound.
\end{remark}

It's known that isoperimetric regions in Cartan-Hadamard manifolds need not be connected, by work of Hass \cite{Hass2016}. Establishing their boundedness is still an open problem that we settle under the assumptions of each theorem above. We prove this in Subsection \ref{ssec: bdd}.

\begin{corollary}\label{cor: isop region bdd}
Let $(M^n, g)$ be a Cartan-Hadamard manifold with $\Sec_g \leq\bar k \leq 0$ and $E\subset M^n$ be an isoperimetric region. If \eqref{eqn: conjecture sem Ricci arbitrary vol} holds true, then $E$ is bounded.
\end{corollary}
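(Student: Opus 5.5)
The plan is to combine the standard regularity theory of isoperimetric regions with a density (monotonicity-type) estimate driven by \eqref{eqn: conjecture sem Ricci arbitrary vol}, and then to argue that an unbounded isoperimetric region would have infinitely many disjoint pieces of mass bounded below, contradicting $\vol_g(E)<\infty$.

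\textbf{Step 1: regularity.} By the regularity theory for isoperimetric sets, $E$ is a $(\Lambda,r_0)$-minimizer of perimeter with $\Lambda$ comparable to the Lagrange multiplier. Its (reduced) boundary is smooth away from a closed singular set of codimension at least $8$, and it has constant mean curvature $H$. We normalize $E$ so that it agrees with its set of Lebesgue density points, so that every $x\in\partial E$ satisfies $\vol_g(E\cap B_r(x))>0$ for all $r>0$.

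\textbf{Step 2: lower density estimate.} Fix $x\in\partial E$ and set $m(r)=\vol_g(E\cap B_r(x))$. For a.e.\ $r$ we have $m'(r)=\mathcal H^{n-1}(E\cap\partial B_r(x))$. Compare $E$ with the competitor $E\setminus B_r(x)$, and restore the volume $m(r)$ by adding a small ball placed far away, or by a volume-fixing variation supported away from $x$. The cost of that volume adjustment is at most $Cm(r)$ for small $m$, by the standard deformation lemma. This gives
\[
P_g(E\cap B_r(x))\le 2m'(r)+C\,m(r).
\]
Applying \eqref{eqn: conjecture sem Ricci arbitrary vol} to $E\cap B_r(x)$ gives $P_g(E\cap B_r(x))\ge \ipm{\bar k}(m(r))\ge c_n m(r)^{(n-1)/n}$ for $m$ small. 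Since $\bar k\le0$, the model profile dominates the Euclidean one, so this inequality holds for every $m$. For $m(r)$ small the term $Cm$ is absorbed, and we obtain $m'\ge c\,m^{(n-1)/n}$. Integrating yields
\[
m(r)\ge c'r^n \quad\text{for } r\le r_1,
\]
with $c'$ and $r_1$ independent of $x$. The key feature is that no lower curvature bound is used: the noncollapsing comes entirely from the assumed profile inequality.

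\textbf{Step 3: conclusion.} If $E$ were unbounded, we could pick points $x_i\in\partial E$ with $d(x_i,x_j)>2r_1$ for $i\neq j$. The balls $B_{r_1}(x_i)$ are then disjoint, each carries volume of $E$ at least $c'r_1^n$, and so $\vol_g(E)=\infty$, a contradiction.

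\textbf{Main obstacle.} The hardest step is the uniformity of the volume-fixing constant $C$ in Step 2. I would handle it by choosing, once and for all, a fixed small ball $B$ of $M$ far from $x$, or more concretely two disjoint regions in $\partial E$. One of these is far from $x$, so for $r\le r_1$ it is not touched by the cut. A single variation field on that region, independent of $x$, then adjusts the volume at cost $\le C|\delta v|$. If $E$ is bounded near a point this is automatic. Along a sequence of far points the fixed variation region stays disjoint from $B_{r_1}(x_i)$ for all but finitely many $i$, which suffices.
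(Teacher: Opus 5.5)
Your proof is correct, and it takes a different, self-contained route from the paper's. The paper gives no direct argument. It observes that in Nardulli's boundedness proof in bounded geometry \cite{nardulli2014generalized}, the Ricci lower bound enters only through a small-volume Euclidean-type isoperimetric inequality. It then notes that this inequality is supplied by \eqref{eqn: conjecture sem Ricci arbitrary vol}, so the cited proof goes through verbatim. You instead write out a complete comparison argument in localized form:
\begin{enumerate}
\item cut $E$ by a small ball centred at a boundary point;
\item refill the lost volume with a volume-fixing deformation of linear cost;
\item apply the profile inequality, via $\ipm{\bar k}\ge\ipm{0}$, to $E\cap B_r(x)$ to get $m'\gtrsim m^{(n-1)/n}$, hence a uniform lower density bound;
\item conclude by packing.
\end{enumerate}

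What your version buys is transparency. It shows that exactly two ingredients are needed. The first is the profile inequality at small volumes, which plays the role of noncollapsing. The second is a volume-fixing deformation supported in a region fixed once and for all; your two-disjoint-regions trick makes its constants independent of $x$. No lower curvature bound enters, and you also get the extra information of a uniform lower density estimate. The paper's version is shorter, but it leaves the reader to check that nothing else in the cited proof uses bounded geometry.

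One correction to Step 2: the option of restoring the volume by adding a small ball far away does not work. A ball of volume $m$ costs perimeter of order $m^{(n-1)/n}$, which is the same order as the lower bound $c_n m^{(n-1)/n}$, so it cannot be absorbed. Only the volume-fixing deformation with cost at most $Cm$, valid for $m$ below a fixed threshold $\delta_0$, closes the estimate.

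Two minor points:
\begin{itemize}
\item The regularity in Step 1 is not used beyond choosing a representative with $\vol_g(E\cap B_r(x))>0$ for $x\in\partial E$.
\item In Step 3 you should say why $\partial E$ is unbounded. Otherwise $E$ would contain the complement of a large ball, which is connected in a Cartan--Hadamard manifold, and so would have infinite volume.
\end{itemize}
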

\begin{remark}
By Theorems~\ref{thm: main theorem} and \ref{thm: assuming equality sem Ricci lower bound}, if either $(M^n,g)$ has a Ricci lower bound and $\vol_g(E)\leq v_0$, or Assumption~\ref{assump:equality case} holds, the corollary applies.
\end{remark}

\subsection{Brief history of the conjecture.} The Cartan-Hadamard conjecture dates back to 1926, when Weil settled the case $n=2$ \cite{Weil1926,Berger2003}; this was later reproved by Beckenbach-Radó in 1933 \cite{BeckenbachRado1933}. Aubin formulated the conjecture in dimensions $n\geq 3$ in 1975 \cite{Aubin1975}, and related problems were subsequently posed by Gromov \cite{Gromov1981,Gromov1999} and Burago-Zalgaller \cite{BuragoZalgaller1980,BuragoZalgaller1988}. The case $n=3$ was later settled by Kleiner \cite{kleiner1992isoperimetric}, while Croke \cite{Croke1984}, using a completely different method, proved that an Euclidean type inequality holds, i.e., $I_{(M^n, g)}\ge \gamma_n I_{(\mathbb{R}^n, \delta_{euc})}$ with the constant $\gamma_n\in (0, 1]$. Croke also proved that $\gamma_n=1$ if, and only if, $n=4$.  In this way, he proved the Cartan-Hadamard conjecture only in dimension $n=4$. Alternative proofs are available in \cite{RitoreSinestrari2010,Schulze2008} for $n=3$ and in \cite{kloeckner2019cartan} for $n=4$. It is somewhat surprising that Croke's proof in dimension \(n=4\) appeared in the literature before Kleiner's proof in dimension \(n=3\), and that the \emph{non-optimal} inequality $\ip{M^3}{g} \geq \gamma_3 I_{(\mathbb{R}^3, \delta_{euc})}$ is due to Croke, whereas the \emph{sharp} inequality $\ip{M^3}{g}\geq I_{(\mathbb{R}^3, \delta_{euc})}$ was later established by Kleiner.

We highlight the great results and very detailed discussions about the conjecture by Ghomi and Spruck in \cite{GhomiSpruck2022} and its generalized version in $n=4$ by Kloeckner and Kuperberg in \cite{kloeckner2019cartan}, as well as the following resources \cite{druet2010isoperimetric,ritore2023isoperimetric}. The conjecture and related problems have also been studied from several other perspectives; we refer to the nonexhaustive list \cite{Wheeler2026,ghomi2026total,ghomi2026isoperimetric,ghomi2026local,Druet2002,MorganJohnson2000,Schulze2020,cao2003curvature,croke1980some,ghomi2023minkowski,osserman1978isoperimetric}. In a different direction, Hass showed that isoperimetric regions in Cartan-Hadamard manifolds need not be connected \cite{Hass2016}.

The known proofs in dimensions three and four rely on rather different mechanisms. Croke's argument is specific to dimension four and only gives the \emph{sharp} inequality in this dimension. Kleiner's proof proceeds instead through the total curvature conjecture: he first establishes the corresponding total curvature estimate and then derives the Cartan-Hadamard conjecture as a consequence. A central ingredient in his argument is a nontrivial use of the Gauss-Bonnet theorem, and the method does not extend simply by replacing it with higher-dimensional versions of Gauss-Bonnet. More recently, Ghomi and Spruck \cite{GhomiSpruck2022} proved that the implication from the total curvature conjecture to the Cartan-Hadamard conjecture holds in every dimension $n\geq 2$. However, the validity of the total curvature conjecture remains an open problem in arbitrary dimensions as well.

\begin{remark}
Less than a week before we posted this paper, Chen, Ghomi, and Wang~\cite{chen2026cartanhadamardconjecturedimension} presented a proof of the conjecture in dimension $n=5$. Seven days later, the same authors~\cite{chen2026isoperimetricinequalitycmchypersurfaces} put forward a proof of the conjecture in dimensions $n \leq 9$. The authors explicitly acknowledge the use of AI assistance in both papers.

We are not yet familiar with their entire proofs since they're extremely recent. However, we can state that our approach is completely different than theirs.
\end{remark}

\subsection{Brief history of the generalized conjecture.} The \emph{generalized} Cartan-Hadamard conjecture was settled in dimension $n=2$ by Bol in 1941 \cite{Bol1941}, and in dimension $n=3$ by Kleiner in 1992 \cite{kleiner1992isoperimetric}. An alternative proof for $n=3$ using mean curvature flow follows from the more general result of Schulze \cite{Schulze2020}.

The case $n=4$ is more delicate. Kloeckner and Kuperberg \cite{kloeckner2019cartan} extended Croke's method, under extra structural assumptions on the ambient manifold and for small volumes, proved the generalized conjecture for $n=4$. Their argument is also quite different as it uses optical transport and linear programming.

Druet \cite{Druet2002} proved a local version of the generalized conjecture near any point where the scalar curvature is \emph{strictly} less than $n(n-1)\bar k$.

\subsection{What is new?}

The approach developed in this paper is entirely different from the previous methods used to approach the conjecture. In particular, we neither assume nor prove the total curvature conjecture, and our argument does not pass through it whatsoever. Consequently, even in dimensions where the Cartan-Hadamard conjecture is already known, our result yields a \emph{genuinely different proof}, under the additional assumption of a lower Ricci curvature bound. \emph{Our proof in contrast with the known results in the literature is not dimension dependent}. Moreover, none of the previous approaches to the conjecture form part of the mechanism of our proof. 

The lower Ricci curvature bound, and the reduction to this case, cf. Section \ref{sec: Ricci}, play a fundamental role in our argument through the generalized existence and compactness theory for isoperimetric regions developed by the third named author in \cite{nardulli2014generalized,MunozFloresnardulli2020}. Without such a lower bound, the corresponding existence and compactness theory is known to fail. Indeed, one of the main difficulties in approaching the Aubin conjecture is that minimizing sequences—namely, sequences of sets of volume $v$ whose perimeters converge to $\ip{M^n}{g}(v)$—may run off to infinity. This phenomenon can already be seen in the hyperbolic paraboloid $(P,h)$ in $\R^3$, which satisfies $\Sec_h\leq 0$ and $\mathrm{Ric}_h \geq -1$, as it does not contain any isoperimetric region. We show how the lower Ricci curvature bound allows us to overcome this lack of genuine compactness using a generalized one.

\subsection{Overview of the strategy of the proof}

The small-volume case, Theorem~\ref{thm: main theorem}, is proved through two major reductions: first, to generalized isoperimetric regions; and second, to regions represented as normal graphs. More precisely:
\begin{enumerate}
\item In Section~\ref{sec:main reduction}, we use the lower Ricci curvature bound to invoke the generalized existence and compactness theory described above. This reduces the proof of Theorem~\ref{thm: main theorem} to Proposition~\ref{prop: reduction assuming existence}. The lower bound is essential here: without it, minimizing sequences may escape to infinity, as illustrated by the hyperbolic paraboloid $(P,h)$. On the other hand, the generalized existence forces us to work in the setting of Alexandrov metric spaces.

\item Once this reduction has been established, we prove Proposition~\ref{prop: reduction assuming existence} in Section~\ref{sec:small volumes}. The proof centers on showing that generalized isoperimetric regions are represented by normal graphs over geodesic spheres. It then suffices to prove the Cartan--Hadamard conjecture in this setting, which we do by hand in Lemma~\ref{lem: isoperimetric inequality for normal graphs}.
\end{enumerate}

The proof of Theorem~\ref{thm: assuming equality sem Ricci lower bound} requires an additional step because no Ricci lower bound is assumed in that result. Specifically, in Section \ref{sec: Ricci}, we show that it suffices to prove Theorem~\ref{thm: assuming equality sem Ricci lower bound} under the additional assumption $\mathrm{Ric}_g \geq (n-1)\underline{k}.$ Since the third author and Mu\~noz-Flores show in \cite{NardulliFlores2020} that the isoperimetric profile \eqref{eq:IsoperimetricProfileDfefinition} can be defined by minimizing over bounded smooth sets, we use this boundedness to modify the metric $g$ outside a sufficiently large ball, constructing a metric $\hat g$ with the desired Ricci curvature lower bound. The fact that the metric $g$ and $\hat g$ coincide inside that very large ball then completes the reduction after a simple computation.

Once the reduction to the Ricci lower-bound setting is complete, we essentially follow the strategy outlined in steps (1) and (2) above.

\subsection{AI disclosure} Publicly available LLMs were used to improve the grammar and flow of the introduction and to assist with literature searches. 

\subsection{Acknowledgments} 

M. Agnoletto acknowledges financial support from CAPES Grant No.~88887.667684/2022-00, CNPq Grants No.~201543/2024-9 and 201742/2025-0, and FAPESP Grant No.~2021/05256-0.

S. Nardulli was supported by FAPESP Young Investigator Grant No.~2021/05256-0, CNPq Research Productivity Fellowship A No.~309345/2026-0 and 1D No.~12327/2021-8, and CNPq Grants No.~441922/2023-6, 409669/2025-3, and No.~200817/2025-6. 

R. Resende gratefully acknowledges support from FAPESP Grant No.~2021/05256-0 and CNPq Grants No.~409669/2025-3 and ~441922/2023-6 for research visits to UFABC during which part of this work was carried out.

\section{Notation}\label{sec:Notation}

For each $b \in \R$, let $\mathbb{M}^n_b$ denote the $n$-dimensional simply connected model space of constant curvature $b$. We let $
    \delta_{euc}:=\delta^0$ and $\omega_{n-1}:=\vol_{\sigma}\bigl(\partial B_1^n(0)\bigr)$ where $\sigma$ is the round metric on the sphere.

Given a metric space $(X,d)$, we denote by $\cH^s_d$ the $s$-dimensional Hausdorff measure induced by $d$. For a set $A\subset X$, its diameter is $\diam_d(A):=\sup\{d(x,y):x,y\in A\}$. The symmetric difference of two sets $E,F\subset X$ is denoted by $E\mathbin{\Delta}F:=(E\setminus F)\cup(F\setminus E)$, where for any given sets $A,B$ we define $A\setminus B$ as the set theoretic difference of $A$ and $B$, i.e., $A\setminus B:=\left\{a\in A:a\notin B\right\}$.

For a Riemannian manifold $(M,g)$, we denote by $\vol_g$ its Riemannian volume measure and denote the sectional curvature of $g$ by $\Sec_g$. Given $k_1,k_2\in\R$, the notation $k_1\leq\Sec_g\leq k_2$
means that $k_1\leq\Sec_g(\sigma)\leq k_2$ for every two-dimensional subspace $\sigma\subset T_xM$ and every $x\in M$. {For $x \in M$ and $R > 0$, we denote by $B_R^g(x)$ the geodesic ball with respect to the metric $g$ centered at $x$ of radius $R$.}

Let $(M,g)$ be a Riemannian manifold and $\Sigma\subset M$ be a smooth embedded hypersurface with trivial normal bundle, and let
$\nu$ be a choice of unit normal vector field along $\Sigma$. Given a function
$u:\Sigma\to\mathbb{R}$ such that $u(x)\nu(x)$ lies in the domain of the
normal exponential map for every $x\in\Sigma$, the normal graph of $u$
over $\Sigma$ is defined by
\[
\operatorname{graph}_{\Sigma}^{g}(u)
:=
\left\{
\exp_{x}^{g}\bigl(u(x)\nu(x)\bigr):x\in\Sigma
\right\},
\]
where $\exp^{g}$ denotes the Riemannian exponential map.

Let $(X,d,\mu)$ be a metric measure space. A measurable set $E\subset X$ is a set of finite perimeter if $\ind E\in BV(X)$. Its perimeter in an open set $U\subset X$ is denoted by $P_d(E;U):=|D\ind E|(U)$, and we set $P_d(E):=P_d(E;X)$. In a Riemannian manifold $(M,g)$, we use the notations $P_g(E;U):=P_{d_g}(E;U)$ and $P_g(E):=P_{d_g}(E)$, where $d_g$ is the canonical path lenght metric on $M$ induced by the metric Riemannian tensor $g$. 

A sequence of measurable sets $\{E_j\}_j$ converges to $E$ in $L^1_{\mathrm{loc}}(X,\mu)$ if $\mu\bigl((E_j\mathbin{\Delta}E)\cap K\bigr)\longrightarrow 0$ for every compact set $K\subset X$. It converges to $E$ in $L^1(X,\mu)$ if
$\mu(E_j\mathbin{\Delta}E)\longrightarrow 0$.

In view of \cite[Definition~1.2 and Theorem~1]{NardulliFlores2020}, the isoperimetric profile of a complete Riemannian manifold can equivalently be defined by minimizing over sets of finite perimeter. Accordingly, we use the following formulation. 

The {\emph{isoperimetric profile of a Riemannian manifold $(M,g)$}} is the function
\begin{equation}\label{eq:IsoperimetricProfileDfefinition}
    \ip{M}{g}(v)
    :=
    \inf\bigl\{
        P_g(E):
        E\subset M\text{ is a set of finite perimeter and }
        \vol_g(E)=v
    \bigr\},
\end{equation}
defined for any $v\in(0,\vol_g(M))$. A finite primeter set $E\subset M$ realizing this infimum is called an isoperimetric region of volume $v$. 

A {\emph{minimizing sequence for $\ip{M}{g}(v)$}} is a sequence of sets of finite perimeter $\{E_j\}_j$ such that
\begin{equation*}
    \vol_g(E_j)=v
    \quad\mbox{for every }j,
    \qquad
    P_g(E_j)\longrightarrow\ip{M}{g}(v).
\end{equation*}

For the following notions, we refer the reader to \cite[Section~1.3]{AmbrosioBrueSemola2019}. We write
\begin{equation*}
    (X_j,d_j,\mu_j,x_j)
    \xrightarrow{\mathrm{pmGH}}
    (X,d,\mu,x)
\end{equation*}
to denote pointed measured Gromov-Hausdorff convergence. More precisely, this means that the spaces can be isometrically embedded into a common complete metric space $(Z,d_Z)$ in such a way that the base points converge, bounded balls converge in the Hausdorff sense, and the pushforwards of the measures converge weakly against functions in $C_c(Z)$, cf.\ \cite[Definition~2.12]{antonelli2022isoperimetric}.

Under such a realization, we say that measurable sets $E_j\subset X_j$ {\emph{converge $L^1$-strongly to a measurable set $E\subset X$}} if
\begin{equation*}
    (\iota_j)_\#(\ind{E_j}\mu_j)
    \rightharpoonup
    \iota_\#(\ind E\mu)
\end{equation*}
weakly as measures on $Z$ and
\begin{equation*}
    \mu_j(E_j)\longrightarrow\mu(E),
\end{equation*}
where $\iota_j:X_j\to Z$ and $\iota:X\to Z$ are the corresponding isometric embeddings, cf.\ \cite[Definition~2.15]{antonelli2022isoperimetric}.

\subsection{Alexandrov spaces.} We define and recall some basic properties of these spaces, we refer the reader to \cite{alexander2019invitation,kapovitch2020cd} and the references therein for the synthetic notions of curvatures and definitions of metric classes as $\textup{CAT, CBB},$ and $\textup{RCD}$.

\begin{definition}\label{def:alex}
Let $\bar k\geq \underline k$ be real numbers and $(X, d)$ a complete and locally compact metric space. We say that:

\begin{enumerate}[\upshape (i)]
    \item $(X,d)$ is an Alexandrov space with curvature bounded from above by $\bar k$ if it is $\textup{CAT}(\bar k)$ and bounded from below by $\underline k$ if it is $\textup{CBB}(\underline k)$;

    \item $(X,d,\cH^n_d)$ is non-collapsing if there exists a real number $v_{nc}$ such that$$\inf_{x\in X} \cH^n_d(B_1^d(x)) \geq v_{nc} > 0;$$

    \item we refer the reader to \cite[\S 10.10]{BuragoBuragoIvanov2001} for the definition of Alexandrov boundary, the specific definition is not revelant to this article as we only use it as an assumption to apply results in the literature.
\end{enumerate}
\end{definition}

In the following remark, we collect groundbreaking results obtained for Alexandrov spaces that we will be vital in this article. 

\begin{proposition}[Properties of Alexandrov spaces]\label{rmk:alex}
Let $(X,d)$ be an Alexandrov space with curvature bounded from above by $\bar k \leq 0$ and bounded from below by $\underline k$ with no Alexandrov boundary. Then
\begin{enumerate}[\upshape (i)]
    \item\label{rmk: Alex simply connected} $(X,d)$ is simply-connected;

    \item\label{rmk: Alex RCD} $(X,d,\cH^n_d)$ is a $\textup{RCD}((n-1)\underline{k}, n)$ space;
        
    \item\label{rmk: Alex C1alpha} $(X,d)$ admits a $C^{3,\alpha}$ atlas and a $C^{1,\alpha}$ metric $g$, for any $\alpha \in (0,1)$, whose induced distance on $X$ coincides with $d$. Moreover, the metric $g$ is actually of class $W^{2,p}$ for every $p\geq 1$;

    \item\label{rmk: Alex approximants} there exists a sequence of Riemannian metrics $g_i\in C^\infty$ on $X$ that converge to $g$ in $C^{1,\alpha}$-topology for any $\alpha\in (0,1)$ or in $W^{2,p}$-topology for any $p\geq 1$. Furthermore, let $d_i$ be the distance induced by $g_i$, then each $(X, d_i)$ is $\textup{CAT}(\bar k_i)$ and $\textup{CBB}(\underline k_i)$ with     \begin{equation*} \underline k \leq \liminf_i \underline k_i \leq \limsup_i \bar k_i \leq \bar k.\end{equation*}
    {Moreover, $\bar k_i$ can be taken strictly smaller than $0$.}
\end{enumerate}
\end{proposition}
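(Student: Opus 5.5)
Each item is a citation from the literature rather than a new argument, so the plan is to assemble the four conclusions from known structure theory, checking hypotheses carefully along the way.

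For \eqref{rmk: Alex simply connected}, the key input is that a space with curvature bounded above and below and no Alexandrov boundary is locally modelled on a manifold. By the Berestovskii--Nikolaev theorem (see also Alexander--Kapovitch--Petrunin), a locally compact geodesic space with two-sided curvature bounds and no boundary is a topological $n$-manifold. Its metric comes from a Riemannian metric of low regularity, giving \eqref{rmk: Alex C1alpha}: one uses distance coordinates, or harmonic coordinates as in Nikolaev's work, which are $C^{3,\alpha}$ compatible. In these coordinates $g$ is $C^{1,\alpha}$, and in fact $W^{2,p}$ since the curvature is bounded in the distributional sense. Simple connectedness is then read off from the CAT$(\bar k)$ hypothesis with $\bar k\le 0$. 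Here I interpret CAT$(\bar k)$ globally (the Cartan--Hadamard setting where limits arise). Geodesics are then unique and vary continuously with their endpoints, so the space is contractible along geodesics to a point. This is the Cartan--Hadamard theorem for metric spaces.

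For \eqref{rmk: Alex RCD}, I would invoke Petrunin's theorem that an $n$-dimensional CBB$(\underline k)$ space, equipped with $\cH^n$, satisfies CD$((n-1)\underline k,n)$. Zhang--Zhu give the same conclusion via optimal transport. Alexandrov spaces are infinitesimally Hilbertian, since the tangent cones are Euclidean almost everywhere, so CD upgrades to RCD. Here the upgrade is even easier, because item \eqref{rmk: Alex C1alpha} shows the space is a $C^{1,\alpha}$ Riemannian manifold, which makes Cheeger energy quadratic.

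For \eqref{rmk: Alex approximants}, I would mollify $g$ in the $C^{3,\alpha}$ atlas using a partition of unity, giving $g_i\to g$ in $C^{1,\alpha}$ and in $W^{2,p}$. The hard step is controlling the sectional curvatures of the approximants. $W^{2,p}$ convergence only controls curvature in $L^p$, not pointwise, so a naive mollifier does not preserve the two-sided bounds up to $o(1)$. I would try a Ricci-flow or harmonic-coordinate smoothing in the spirit of Nikolaev's smoothing theorem for spaces of bounded curvature. Nikolaev proved that such metrics are approximated by smooth metrics whose curvatures lie in $[\underline k-\epsilon,\bar k+\epsilon]$. Alternatively, Kapovitch's argument controls the curvature tensor of the mollified metric pointwise, because the distributional curvature is an $L^\infty$ tensor lying in the convex cone of allowed values.

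The final adjustment makes the upper bounds strictly negative. I would rescale or conformally perturb $g_i$ by a factor $e^{2\varphi_i}$ with $\varphi_i$ small and strictly convex along geodesics. Since $\bar k\le 0$, the resulting $\bar k_i\le \bar k+o(1)$ can be pushed below $0$ by an arbitrarily small amount, so the inequalities on $\underline k_i$ and $\bar k_i$ survive in the limit.

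I expect this last step, together with the uniform pointwise curvature control, to be the main obstacle. It is where the literature must be quoted most carefully, including whether global CAT$(\bar k_i)$ holds, which for a complete simply connected manifold follows from the pointwise bound.
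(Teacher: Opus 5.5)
Items (i)--(iii) and the approximation part of (iv) are fine and essentially follow the paper. For (i) you use contractibility of a globally CAT$(0)$ space; the paper cites Alexander--Kapovitch--Petrunin for this. Items (iii) and the smoothing in (iv) are Berestovskij--Nikolaev. For (ii) you upgrade Petrunin's CD$((n-1)\underline k,n)$ to RCD via infinitesimal Hilbertianity coming from the $C^{1,\alpha}$ Riemannian structure of (iii); the paper invokes Kapovitch--Ketterer instead, and either works.

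Your worry about naive mollification is overstated. In coordinates, the second-order part of $R_{ijkl}$ is linear in $g$ with constant coefficients, so it commutes with mollification. The $\Gamma\Gamma$ terms converge uniformly since $g\in C^{1,\alpha}$. So once the a.e.\ curvature of $g$ satisfies the bounds, chartwise mollification preserves them up to $o(1)$. This is your ``convex cone'' alternative, and it is the same mechanism the paper uses to smooth $1+r^2$.

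The genuine gap is the \emph{moreover} clause of (iv), which is the only part the paper proves by hand (it is nontrivial only when $\bar k=0$). Rescaling cannot turn a positive curvature error into a negative one. A conformal factor acts through
\[
e^{2\varphi}\Sec_{e^{2\varphi}h}(P)=\Sec_h(P)-\sum_{\ell=1}^{2}\operatorname{Hess}_h\varphi(e_\ell,e_\ell)-\bigl|\operatorname{proj}_{P^\perp}\nabla^h\varphi\bigr|_h^2 .
\]
To absorb an error $\Sec_{h_i}\le\varepsilon_i$ that is uniform over a noncompact space, you need $\operatorname{Hess}\varphi_i\ge c\,\varepsilon_i h_i$ everywhere. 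That forces $\varphi_i$ to grow at least quadratically along geodesics, so $\varphi_i$ cannot be ``small'', and the Hessian and gradient terms above are unbounded. Your claim that the lower bounds ``survive in the limit'' is therefore unsupported: nothing in your sketch controls $\Sec_{g_i}$ from below at infinity.

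Two ingredients are missing.
\begin{enumerate}
\item[(a)] A smooth proper $f$ with $g\le\operatorname{Hess}_gf\le A\sqrt f\,g$ and $|Df|_g^2\le Bf$, obtained by chartwise smoothing of $1+d(o,\cdot)^2$. The lower Hessian bound comes from the \emph{global} CAT$(0)$ condition, and the upper Hessian and gradient bounds come from the \emph{global} CBB$(\underline k)$ condition.
\item[(b)] The specific choice $\varphi_i=2\varepsilon_i f$. The prefactor $e^{-4\varepsilon_i f}$ then damps the growing terms, because $\sqrt{\varepsilon f}\,e^{-4\varepsilon f}$ and $\varepsilon f\,e^{-4\varepsilon f}$ are bounded and $(\underline k-\varepsilon_i)e^{-4\varepsilon_i f}\ge\underline k-\varepsilon_i$.
\end{enumerate}
Together these give $\underline k-C\sqrt{\varepsilon_i}\le\Sec_{g_i}\le-\varepsilon_i e^{-4\varepsilon_i f}<0$. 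Without the growth control in (a), for instance for a faster-growing convex $\varphi_i$, the damping is insufficient and the lower bound degenerates as $\varepsilon_i\to0$. Note also that the upper bound is negative only pointwise and tends to $0$ at infinity. It is not the uniform shift ``below $0$ by an arbitrarily small amount'' that you describe.
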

\begin{remark}
In view of \cite{Druet2002}, we observe the following. Let $(M^n,g)$ be a Cartan-Hadamard manifold, and $g_i$ given by \eqref{rmk: Alex approximants}, which satisfy $\Sec_{g_i}<0$. By \cite[Theorem 1]{Druet2002}, each $x\in M$ admits a Druet radius $r_i(x)>0$ such that $P_{g_i}(E)>\ipeuc(\vol_{g_i}(E))$ for every set $E\subset B_{r_i(x)}^{g_i}(x)$ of finite and positive volume. If $r(x):=\liminf_i r_i(x)>0$, continuity allows us to pass to the limit and obtain $P_g(E)\geq\ipeuc(\vol_g(E))$ for every such set $E\Subset B_{r(x)}^g(x)$.
However, the available estimates ensuring a uniform positive lower bound for the Druet radii require a uniform lower Ricci curvature bound in general. This brings us back to the setting of \Cref{thm: main theorem}, which is more general because it only requires the volume to be small.

Moreover, the Druet radius admits a lower bound in terms of the harmonic radius. By \cite[Proposition 5.20]{Pigola2024}, the harmonic radius is infinite only if the manifold is isometric to the Euclidean space.
\end{remark}
\begin{proof}
Item \eqref{rmk: Alex simply connected} follows since Alexander, Kapovich, and Petrunin, in \cite[Corollary  2.2.6]{alexander2019invitation}  showed that $(X,d)$ is contractible. Item \eqref{rmk: Alex RCD} follows from work of Petrunin in \cite[Main theorem]{Petrunin2011} which ensures that it satisfies the $\mathrm{CD}((n-1)\underline k,n)$ and then we can apply a result by Kapovich and Ketterer in \cite[Theorem 1]{kapovitch2020cd}. Item \eqref{rmk: Alex C1alpha} is a famous theorem proved by Berestovskij and Nikolaev in \cite[Theorem 14.1]{berestovskij1993multidimensional}. Item \eqref{rmk: Alex approximants} is stated in \cite[Theorem 15.1, Remark 1]{berestovskij1993multidimensional} by Berestovskij and Nikolaev except for the moreover part. 

We are then left to prove the moreover part of item \eqref{rmk: Alex approximants}. This is a routine argument. Fix $o\in X$ and put $r=d(o,\cdot)$, $\rho=1+r^2$. We have that $\rho\in C^{1,1}_{\mathrm{loc}}(X)$ and $|D\rho|_g=2r$ by \cite[Proposition~4.1 and~1.6]{KL}. By adapting the argument in \cite[Section~2.4]{KL} using the \emph{global} condition of $X$ being $\textup{CAT}(0)$ and \cite[Theorem~9.25]{AKP} using the \emph{global} condition $\textup{CBB}(\underline k)$, we obtain the following global inequality almost everywhere
\begin{equation}\label{eq:rho}
  2g\le\operatorname{Hess}_g\rho\le2(1+\sqrt{-\underline k} r)\,g .
\end{equation}

Fix a locally finite atlas $(U_\nu)_{\nu\ge1}$ and a partition of unity
$(\chi_\nu)$ with $K_\nu=\operatorname{supp}\chi_\nu\subset U_\nu$ compact. In
$U_\nu$, $\operatorname{Hess}_g(\rho*\eta_\sigma)-(\operatorname{Hess}_g\rho)*\eta_\sigma\to0$ uniformly on
$K_\nu$ for a mollifier $\eta_\sigma\ge0$, because the Christoffel symbols $\Gamma$ and $D\rho$ are
continuous. Positivity of $\eta_\sigma$ therefore preserves~\eqref{eq:rho} up to
small errors. Choose $\rho_\nu:=\rho*\eta_{\sigma_\nu}$ with value, gradient and Hessian errors
at most $2^{-\nu}\big/\bigl(8+8\sup_{K_\nu}(|\operatorname{Hess}_g\chi_\nu|_g+2| D\chi_\nu|_g)\bigr)$
on $K_\nu$, and put $$f=\sum_\nu\chi_\nu\rho_\nu.$$
Since $\sum D\chi_\nu=0=\sum\operatorname{Hess}_g\chi_\nu$, then $\operatorname{Hess}_gf = \sum_\nu \chi_\nu \operatorname{Hess}_g\rho_\nu + Err$ where $Err$ only involves term with $\rho_\nu-\rho$ and $D(\rho_\nu-\rho)$. Hence $|f-\rho|\le\frac12$, $|D f- D\rho|_g\le1$ and, by \eqref{eq:rho}, $g\le\operatorname{Hess}_gf\le(3+2\sqrt{-\underline k} r)g$. As $f\ge\frac12+r^2$, for constants $A_0,B_0>0$ depending only on $\underline k$, we obtain
\begin{equation*}
  g\le\operatorname{Hess}_gf\le A_0\sqrt f\,g \text{ and } |D f|_g^2\le B_0f.
\end{equation*}

Let $h_j$ be the metric given by item \eqref{rmk: Alex approximants}, $A:= 3A_0$, and $B := 2B_0$, by the $W^{2,p}$ convergence of $h_j$ to $g$ and the estimates above, we readily obtain
\begin{gather}
  \underline k - \varepsilon_j\le\Sec_{h_j}\le\varepsilon_j,\qquad \tfrac12g\le h_j\le\tfrac32g,\label{eq:h1}\\
  \tfrac12 h_j\le\operatorname{Hess}_{h_j}f\le A\sqrt f\,h_j,\qquad |D f|_{h_j}^2\le Bf.\label{eq:h2}
\end{gather}

Let $g_j=e^{4\varepsilon_jf}h_j$, and let $P$ be a $2$-plane with
$h_j$-orthonormal basis $e_1,e_2$. By a direct computation, we get
\begin{equation*}
  e^{4\varepsilon_jf}\Sec_{g_j}(P)=\Sec_{h_j}(P)
  -2\varepsilon_j\sum_{\ell=1}^2\operatorname{Hess}_{h_j}f(e_\ell,e_\ell)
  -4\varepsilon_j^2\bigl|\operatorname{proj}_{P^\perp}\nabla^{h_j}f\bigr|_{h_j}^2 .
\end{equation*}

\emph{Upper bound.} By \eqref{eq:h1} and \eqref{eq:h2}, the right-hand side is at
most $\varepsilon_j-2\varepsilon_j$, so
$\Sec_{g_j}\le-\varepsilon_je^{-4\varepsilon_jf}<0$. Thus the lower Hessian
bound absorbs the possibly positive curvature error of $h_j$.

\emph{Lower bound.} By \eqref{eq:h1} and \eqref{eq:h2}, the right-hand side is at
least $\underline k -\varepsilon_j-4A\varepsilon_j\sqrt f-4B\varepsilon_j^2f$. We have
$\underline k -\varepsilon_j\le0$, $f\ge0$, $\sqrt{\varepsilon_j f}\,e^{-4\varepsilon_j f}\le(8e)^{-1/2}$, and $\varepsilon_j f\,e^{-4\varepsilon_j f}\le(4e)^{-1}$, this implies
\begin{equation*}
    \Sec_{g_j}\ge \underline k -\frac{\sqrt2A}{\sqrt e}\sqrt{\varepsilon_j}-\Bigl(1+\frac Be\Bigr)\varepsilon_j
  \ge  \underline k  -C\sqrt{\varepsilon_j}.
\end{equation*}
This shows that we can choose $\bar k_j \leq 0$ as desired. The corresponding convergences follow trivially by the construction.
\end{proof}

\subsection{Limits of Alexandrov spaces}

We recall a couple of definitions about the limiting behavior of spaces with lower Ricci curvature bounds and in particular of Alexandrov spaces.

\begin{definition}
Let $\alpha \in (0,1)$. A Riemannian manifold $(M,g)$ has $C^{1,\alpha}$-locally asymptotic bounded geometry if it has Ricci curvature bounded below, it is non-collapsing, and if for every diverging sequence of points $(p_j)$, there exist a subsequence $(p_{j_l})$ and a pointed  manifold $(M_{\infty},g_{\infty},p_{\infty})$, with $g_{\infty}$ of class $C^{1,\alpha}$, such that $(M,g,p_{j_l}) \to (M_{\infty},g_{\infty},p_{\infty})$ in the pointed $C^{1,\alpha}$-topology
\end{definition}

Later we use the fact that manifolds with two-sided bounds on the sectional curvature admit particularly good properties for their pointed limits, as we remark below.

\begin{remark}\label{rmk:locally asymptotic bdd geometry}
If $(Z,g_Z)$ is a Cartan-Hadamard manifold with $\mathrm{Ric}_{g_Z} \geq (n-1)\underline k$, by \cite[Theorem 1.1 and Remark 2.4(i)]{Anderson1990}, we have that $(Z,g_Z)$ has $C^{1,\alpha}$-locally asymptotic bounded geometry.
\end{remark}

The next is the well-known smooth version of the pointed Gromov-Hausdorff convergence which will be used below, cf. \cite[Section 11.3.2, pg. 414]{Petersen2016}.

{\begin{definition}\label{def:flat convergence}
Let $\alpha\in(0,1)$,
$\{(M_i,g_i,p_i)\}_{i\in\mathbb{N}}$ be a sequence of pointed smooth
Riemannian manifolds, and $E_i\subset M_i$ be locally finite
perimeter sets for all $i \in \mathbb{N}$. Suppose that
$(M_i,g_i,p_i)\to(M_\infty,g_\infty,p_\infty)$ in the pointed
$C^{1,\alpha}$-flat topology, and let $E_\infty\subset M_\infty$ be a
locally finite perimeter set.

We say that $(E_i,g_i,p_i)$ converges to
$(E_\infty,g_\infty,p_\infty)$ in the pointed
$C^{1,\alpha}$-flat topology if, for every $R>0$, there exist a domain $\Omega_R$ such that $B_R^{g_\infty}(p_{\infty}) \subset \Omega_R \subset M_{\infty}$, an
index $i_R\in\mathbb{N}$, and $C^{2,\alpha}$ embeddings
$\Psi_{i,R}\colon \Omega_R\to M_i$ such that for all $i\geq i_R$ we have that
$B_R^{g_i}(p_i)\subset\Psi_{i,R}(\Omega_R)$,
$\Psi_{i,R}(p_\infty)=p_i$,
$\Psi_{i,R}^{*}g_i\to g_\infty$ in $C^{1,\alpha}(\Omega_R)$, and{
\[
\vol_{g_\infty}\Bigl(B_R^{g_\infty}(p_\infty)\cap\bigl(\Psi_{i,R}^{-1}(E_i)\mathbin{\Delta}E_\infty\bigr)\Bigr)\longrightarrow 0.
\]}
\end{definition}}

\section{Preliminary results}

In \cite[Theorem~3.4]{MorganJohnson2000}, Morgan and Johnson prove an isoperimetric inequality for compact manifolds; \cite[Section 4.6]{MorganJohnson2000} extends this to noncompact manifold with compact quotient by the isometry group. In \cite[Prop.~3.2]{MondinoNardulli2016}, Mondino and Nardulli prove a similar result for noncompact $M^n$ satisfying a Ricci curvature lower bound. Both results apply to space forms and readily give the next lemma. However, since the proof is substantially simpler in the case of space forms, we present it here.

\begin{lemma}\label{lem: Aubin in model spaces}
If $a \leq A \leq 0$, then $\ipm{a} \geq \ipm{A}$.
\end{lemma}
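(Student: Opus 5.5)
The approach is to compare model space isoperimetric profiles directly, using that in each $\mathbb{M}^n_b$ geodesic balls are isoperimetric. This is the classical result of Schmidt: for every $v$ one has $\ipm{b}(v)=\omega_{n-1}\,\mathrm{sn}_b(r)^{n-1}$, where $r=r_b(v)$ is determined by
\[
v=\omega_{n-1}\int_0^{r}\mathrm{sn}_b(t)^{n-1}\,dt,
\]
and $\mathrm{sn}_b(t)=\sinh(\sqrt{-b}\,t)/\sqrt{-b}$ for $b<0$, $\mathrm{sn}_0(t)=t$. So the claim $\ipm{a}\ge\ipm{A}$ reduces to a one-variable comparison. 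I will parametrize by volume and write $V_b(r)$ and $P_b(r)=\omega_{n-1}\mathrm{sn}_b(r)^{n-1}$ for the volume and area of the ball of radius $r$.

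First, I will compute $\frac{d}{dv}\ipm{b}(v)$ using $dV_b/dr=P_b$ and $dP_b/dr=(n-1)\omega_{n-1}\mathrm{sn}_b^{n-2}\mathrm{sn}_b'$. This gives
\[
\frac{d\,\ipm{b}}{dv}=(n-1)\frac{\mathrm{sn}_b'(r)}{\mathrm{sn}_b(r)}=(n-1)\,\mathrm{ct}_b(r),
\]
which is the mean curvature of the sphere. To eliminate $r$, I express the mean curvature as a function of the area $p$. Since $\mathrm{sn}_b'^2=1-b\,\mathrm{sn}_b^2$, we get $\mathrm{ct}_b=\sqrt{\mathrm{sn}_b^{-2}-b}$. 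With $s=(p/\omega_{n-1})^{1/(n-1)}$, the profile $I_b=\ipm{b}$ therefore solves the autonomous ODE
\[
I_b'(v)=F_b(I_b(v)),\qquad F_b(p)=(n-1)\sqrt{s^{-2}-b},\qquad I_b(0)=0.
\]

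Second, I compare the two solutions. Since $a\le A\le0$, we have $-a\ge -A$, so $F_a(p)\ge F_A(p)$ for every $p>0$. Both solutions start at $0$ with the same Euclidean asymptotics $I_b(v)\sim c_n v^{(n-1)/n}$ as $v\to0$, and $F_b$ is locally Lipschitz on $(0,\infty)$. The standard ODE comparison then gives $I_a\ge I_A$.

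The main obstacle is that the ODE is singular at $p=0$, so the naive comparison has no well-defined starting point. I plan to avoid this by separation of variables. Inverting, $v$ as a function of the area is $V_b(p)=\int_0^p dq/F_b(q)$, and this integral converges because $1/F_b(q)\sim C q^{1/(n-1)}$ near $0$. Since $F_a\ge F_A$ pointwise, $V_a(p)\le V_A(p)$ for every $p$. Both $I_a$ and $I_A$ are increasing bijections $[0,\infty)\to[0,\infty)$ (and $I_0$ too when $A=0$), so their inverses are $V_a$ and $V_A$. The inequality $V_a\le V_A$ therefore inverts to $I_a(v)\ge I_A(v)$ for all $v$, which is the claim. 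Equality for $a<A$ fails for $v>0$, but that is not needed here.
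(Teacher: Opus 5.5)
Your proof is correct, but it organizes the key comparison differently from the paper. The paper compares the derivatives $\ipm{a}'(v)$ and $\ipm{A}'(v)$ at the \emph{same volume}. It writes both as $(n-1)\,\mathrm{sn}_b'/\mathrm{sn}_b$ evaluated at the radius $R_b(v)$, and then needs two separate monotonicities. First, at a fixed radius this mean curvature is larger for smaller $b$. Second, at a fixed volume $R_a(v)\le R_A(v)$ by Bishop--Gromov, and $\coth$ (respectively $1/r$) decreases in the radius. It then integrates $\ipm{a}'\ge\ipm{A}'$ up from $v=0$, treating $A=0$ as a separate case.

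You instead parametrize by \emph{area}. Writing the mean curvature as $F_b(p)=(n-1)\sqrt{s^{-2}-b}$, the dependence on $b$ becomes explicit and $F_a\ge F_A$ is an immediate pointwise inequality. This covers $A=0$ uniformly and needs no volume comparison at all. The price is the singularity at $p=0$. You handle it correctly by integrating $dv/dp=1/F_b(p)$, which is integrable since $1/F_b(q)\sim Cq^{1/(n-1)}$, and then inverting the monotone relation $V_a\le V_A$.

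Your earlier appeal to a ``standard ODE comparison'' is superfluous once the separation-of-variables argument is in place, so you can drop it. Your version also gives strict inequality for $a<A$ immediately, since $F_a>F_A$ for $p>0$. More generally, it reduces the lemma to comparing the right-hand sides of two autonomous ODEs, which is the form in which such profile comparisons are usually run.
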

\begin{remark} As it is easily seen from the proof, when $a<A$, we get $\ipm{a}>\ipm{A}$.
    \end{remark}
\begin{proof}
For each $b \leq 0$, by \cite[Theorem 4.35 and Theorem 4.39]{ritore2023isoperimetric}, the isoperimetric regions in $\mathbb{M}^n_b$ are geodesic balls. Therefore, for all $v \geq 0$, $\ipm{b}(v) = P_b(r_b), $ where $P_b(r)$ denote the perimeter of a geodesic ball of radius $r$ in $\mathbb{M}^n_b$ and $r_b$ is such that the volume of the geodesic ball of radius $r_b$ in $\mathbb{M}^n_b$ is equal to $v$. For $b \leq 0$, define
\begin{displaymath}
s_b(r) :=
\begin{cases}
r, & b = 0,\\
\dfrac{\sinh\left(\sqrt{-b} \ r\right)}{\sqrt{-b}}, & b < 0.
\end{cases}
\end{displaymath}

Then, for all $r \geq 0$,
\begin{displaymath}
    P_b(r) = \omega_{n-1}\left(s_b(r)\right)^{n-1}\text{ and }
V_b(r) = \omega_{n-1}\int_0^r\left(s_b (t)\right)^{n-1}dt, 
\end{displaymath}

Since $V_b'(r) > 0$ for all $r > 0$, the function $V_b$ is invertible. Using the inverse function theorem, we obtain
\begin{displaymath}
\ipm{b}'(v) = P_b'(R_b(v))R_b'(v) = \dfrac{P_b'(R_b(v))}{V_b'(R_b(v))},
\end{displaymath}

for all $v \geq 0$, where $R_b(v) := V_b^{-1}(v)$. Using that $V_b^\prime(r) = P_b(r)$ for any $r>0$ and the last displayed equation, we straightforwardly see that for $b < 0$
\begin{equation}\label{eqn: isop profile and s_b}
\ipm{b}'(v) = (n-1)\frac{s_b^\prime(R_b(v))}{s_b(R_b(v))} = (n-1)\sqrt{-b}\coth(\sqrt{-b} \ R_b(v)),
\end{equation}
and, for $b=0$
\begin{equation}\label{eqn: isop profile and s_0}
\ipm{0}'(v) = (n-1)\frac{s_0^\prime(R_0(v))}{s_0(R_0(v))} = (n-1) \dfrac{1}{R_0(v)}.
\end{equation}

Since the conclusion is immediate when $a=A$, it only remains to treat two cases. Firstly, suppose that $a<A<0$. Since the function in the right-hand side of \eqref{eqn: isop profile and s_b} is nondecreasing with respect to $b$ and $a < A$, we derive that 
\begin{equation}\label{eqn: comparison s_b fnc}
\ipm{a}'(v) \geq (n-1)\frac{s_A^\prime(R_a(v))}{s_A(R_a(v))}.
\end{equation}

Moreover, by the Bishop–Gromov Volume Comparison Theorem \cite[Theorem 11.19]{Lee2019}, we have that $R_a(v) \leq R_A(v)$. Combining this with the fact that the right-hand side of \eqref{eqn: comparison s_b fnc} is nonincreasing in the radius, we deduce that, for all $v \geq 0$,
\begin{equation*}
\ipm{a}'(v) \geq (n-1)\frac{s_A^\prime(R_A(v))}{s_A(R_A(v))} \overset{\eqref{eqn: isop profile and s_b}}{=} \ipm{A}'(v).
\end{equation*}

Integrating this inequality and using that both isoperimetric profiles vanish at $v=0$, we conclude this case. Secondly, suppose that $a < A = 0$. Since for every $r>0$ and every $b<0$ we have
\begin{displaymath}
(n-1)\sqrt{-b}\coth(\sqrt{-b}r) \geq (n-1)\dfrac{1}{r},
\end{displaymath}
it follows from \eqref{eqn: isop profile and s_b} that
\begin{equation*}
\ipm{a}'(v) \geq (n-1)\frac{1}{R_a(v)}.
\end{equation*}

Combining the last displayed equation with the inequality $R_a(v)\leq R_0(v)$, we obtain that
\begin{displaymath}
\ipm{a}'(v) \geq (n-1)\dfrac{1}{R_0(v)} \overset{\eqref{eqn: isop profile and s_0}}{=} \ipm{0}'(v).
\end{displaymath}

Integrating this inequality and using that both isoperimetric profiles vanish at $v=0$, we conclude the proof.
\end{proof}

We will use some properties of the isoperimetric profile of model spaces that we prove below.

\begin{lemma}\label{lem: isop profile model spaces subadditive}
For all $a \leq 0$, $\ipm{a}$ is {strictly} concave, and hence {strictly} subadditive.
\end{lemma}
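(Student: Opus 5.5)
We prove concavity by a second-derivative computation, using the formulas for $\ipm{b}'$ obtained in the proof of Lemma \ref{lem: Aubin in model spaces}, and then deduce subadditivity from concavity together with $\ipm{a}(0)=0$.

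\emph{Concavity.} For $a<0$, formula \eqref{eqn: isop profile and s_b} gives
\[
\ipm{a}'(v)=(n-1)\sqrt{-a}\coth\bigl(\sqrt{-a}\,R_a(v)\bigr).
\]
Here $R_a=V_a^{-1}$ is strictly increasing, since $R_a'(v)=1/P_a(R_a(v))>0$. The map $r\mapsto\coth(\sqrt{-a}\,r)$ is strictly decreasing on $(0,\infty)$. Hence $\ipm{a}'$ is strictly decreasing on $(0,\infty)$. Explicitly, we compute
\[
\ipm{a}''(v)=-(n-1)(-a)\,\sinh^{-2}\bigl(\sqrt{-a}R_a(v)\bigr)\,\frac{1}{P_a(R_a(v))}<0 .
\]
For $a=0$, formula \eqref{eqn: isop profile and s_0} gives $\ipm{0}'(v)=(n-1)/R_0(v)$, which is again strictly decreasing. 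Alternatively, one can use the explicit form $\ipm{0}(v)=c_n v^{(n-1)/n}$. Since $\ipm{a}$ is continuous on $[0,\infty)$ and has strictly decreasing derivative on $(0,\infty)$, it is strictly concave on $[0,\infty)$.

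\emph{Subadditivity.} Let $v_1,v_2>0$ and set $t=v_1/(v_1+v_2)\in(0,1)$. Strict concavity together with $\ipm{a}(0)=0$ gives
\[
\ipm{a}(v_1)=\ipm{a}\bigl(t(v_1+v_2)+(1-t)\cdot 0\bigr)>t\,\ipm{a}(v_1+v_2).
\]
By the same argument,
\[
\ipm{a}(v_2)>(1-t)\,\ipm{a}(v_1+v_2).
\]
Adding the two inequalities yields $\ipm{a}(v_1)+\ipm{a}(v_2)>\ipm{a}(v_1+v_2)$, which is strict subadditivity.

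The only point needing care is the endpoint $v=0$. There $\ipm{a}'$ blows up, because $R_a(v)\to0$. However, concavity requires only continuity at $0$ together with a decreasing derivative on the open interval, so this blow-up causes no difficulty.
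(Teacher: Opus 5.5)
Your proof is correct and follows essentially the same route as the paper. Both arguments get strict concavity from the sign of the second derivative (equivalently, from strict monotonicity of the first-derivative formulas in the preceding lemma), and then get strict subadditivity from the same rescaling argument $f(tv)>t f(v)$ with $f(0)=0$; your explicit second-derivative formula and your handling of the endpoint $v=0$ (continuity plus a decreasing derivative on the open interval) are sound, and slightly more carefully stated than in the paper.
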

\begin{proof}
Arguing exactly as in the proof of Lemma \ref{lem: Aubin in model spaces}, by Equation \eqref{eqn: isop profile and s_b}, we straightforwardly see that for $b < 0$
\begin{equation*}
\ipm{b}''(v) = - \dfrac{(n-1)(-b)^{\frac{n-1}{2}}}{\omega_{n-1}(\sinh(\sqrt{-b}R_b(v)))^{n+1}} < 0,
\end{equation*}

and, for $b = 0$
\begin{equation*}
\ipm{0}''(v) = - \dfrac{n-1}{\omega_{n-1}(R_0(v))^{n+1}} < 0.
\end{equation*}

The two last displayed equations implies that $\ipm{a}$ is concave for all $a \leq 0$. Fix $a \leq 0$ arbitrarily and define $f(v) := \ipm{a}(v)$. Since $f$ is strictly concave and $f(0)=0$, we obtain for any $v\neq 0$ that
\begin{equation}\label{eqn: isop profile inequality}
f(tv) = f(tv+(1-t)0) > tf(v), \text{ for any }t\in (0,1).
\end{equation}

Thus, for any $v,w\neq 0$, we get
\begin{align*}
f(v) + f(w) &= f\left((v+w)\frac{v}{v+w}\right) + f\left((v+w)\frac{w}{v+w}\right) \\
\overset{\eqref{eqn: isop profile inequality}}&{>} f(v+w)\frac{v}{v+w} + f(v+w)\frac{w}{v+w} \\
&= f(v+w).
\end{align*}

Since $a$ was taken arbitrarily, we conclude that $\ipm{a}$ is subadditive for all $a \leq 0$.
\end{proof}

\section{Main reduction}\label{sec:main reduction}

In this section, we address one of the main difficulties: the Cartan-Hadamard manifold may fail to admit isoperimetric regions, even under a lower Ricci curvature bound. We show that the problem can be reduced to studying an isoperimetric region in an Alexandrov space with two-sided curvature bounds. 

The result below is our main theorem (Theorem \ref{thm: main theorem}) in the more general context of Alexandrov spaces, cf. Remark \ref{CH - CAT}.

\begin{theorem}\label{thm: CH in alexandrov}
Let $(X, d)$ be a Alexandrov space with {no Alexandrov boundary}, curvature bounded from above by $\bar k \leq 0$ and below by $\underline k$, and noncollapsing constant $v_{nc}>0$ as in Definition \ref{def:alex}. There exists $v_0 = v_0(v_{nc}, \underline k,\bar k, n)>0$ such that $$\ip{X}{d}(v)\geq \ipm{\bar k}(v)\text{ for any }v\in (0,v_0].$$ 
Moreover, if there exists a finite perimeter set $E\subset X$ with $\cH^n_d(E) = v\in (0,v_0]$ and $P_d(E) = \ipm{\bar k}(v)$, then $E$ is isometric to a geodesic ball in $\mathbb{M}^n_{\bar k}$.
\end{theorem}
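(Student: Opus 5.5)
We follow the two-step strategy announced in the overview: first reduce to the existence of a (generalized) isoperimetric region, then show that such regions of small volume are normal graphs over geodesic spheres and compare them with the model by hand.

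\emph{Step 1: reduction to generalized isoperimetric regions.} By Proposition~\ref{rmk:alex}\eqref{rmk: Alex RCD}, $(X,d,\cH^n_d)$ is a non-collapsed $\textup{RCD}((n-1)\underline k,n)$ space. We therefore invoke the generalized existence theory of \cite{nardulli2014generalized,MunozFloresnardulli2020}, in its RCD version as in \cite{antonelli2022isoperimetric}. For each $v$, a minimizing sequence for $\ip{X}{d}(v)$ splits into finitely many pieces. Each piece converges, in the pmGH sense of Section~\ref{sec:Notation}, to an isoperimetric region $E_\ell$ inside a pointed limit space $(X_\ell,d_\ell,p_\ell)$, with
\[
\ip{X}{d}(v)=\sum_\ell P_{d_\ell}(E_\ell),\qquad \sum_\ell \cH^n_{d_\ell}(E_\ell)=v .
\]
Each limit $X_\ell$ is again a non-collapsed Alexandrov space without boundary, with curvature in $[\underline k,\bar k]$ and the same constant $v_{nc}$, since two-sided curvature bounds and non-collapsing pass to Gromov--Hausdorff limits. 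By the strict subadditivity of Lemma~\ref{lem: isop profile model spaces subadditive}, it suffices to prove the following: in every such space, an isoperimetric region $E$ of volume $w\le v_0$ satisfies $P(E)\ge \ipm{\bar k}(w)$, with equality only for a model ball. Subadditivity then gives $\sum_\ell \ipm{\bar k}(v_\ell)\ge \ipm{\bar k}(v)$, and it is strict unless there is a single piece. This handles the rigidity statement as well.

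\emph{Step 2: small isoperimetric regions are nearly round.} Fix such an $E$ of small volume $w$. The RCD theory and the non-collapsing constant give uniform density and diameter bounds, $\diam E\le C w^{1/n}$. Blowing up by the factor $w^{-1/n}$ and using the $C^{1,\alpha}$ (indeed $W^{2,p}$) structure of Proposition~\ref{rmk:alex}\eqref{rmk: Alex C1alpha}, the rescaled spaces converge in $C^{1,\alpha}$ to $\R^n$, and the rescaled regions converge to isoperimetric regions of $\R^n$, that is, unit-volume balls. The constant-mean-curvature equation together with Allard/De~Giorgi-type regularity, available because the metric is $C^{1,\alpha}$, upgrades this convergence to $C^{1,\beta}$ closeness of the boundaries. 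Hence, for $w\le v_0(v_{nc},\underline k,\bar k,n)$, we can write $\partial E=\operatorname{graph}^g_{\partial B_r(x)}(u)$ with $u$ small in $C^{1}$ relative to $r$. Here $B_r(x)$ is a geodesic ball of the same volume, and the center $x$ is chosen by a barycenter-type normalization. Uniformity of $v_0$ comes from a contradiction/compactness argument over the class of spaces under consideration.

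\emph{Step 3: isoperimetric inequality for normal graphs.} This is Lemma~\ref{lem: isoperimetric inequality for normal graphs}, and it is the main obstacle. We first approximate $g$ by the smooth metrics $g_i$ of Proposition~\ref{rmk:alex}\eqref{rmk: Alex approximants}, which have $\Sec\le\bar k_i<0$, and then pass to the limit. In geodesic polar coordinates about $x$ we write $E=\{(t,\theta): t< \rho(\theta)\}$. The volume and perimeter are
\[
\vol(E)=\int_{S^{n-1}}\int_0^{\rho}J\,dt\,d\theta,\qquad
P(E)=\int_{S^{n-1}} J(\rho,\theta)\sqrt{1+|\nabla\rho|^2_{\rho}}\,d\theta .
\]
By Rauch/Günther comparison, the Jacobian satisfies $J(t,\theta)\ge s_{\bar k}(t)^{n-1}$, and moreover $J/s_{\bar k}^{n-1}$ is nondecreasing in $t$; the angular metric on spheres dominates the model one in the same way. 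The idea is to transplant $E$ to a star-shaped set in $\mathbb{M}^n_{\bar k}$ via a radial reparametrization $\tilde\rho$ chosen so that the volume is preserved ray by ray. The monotonicity of $J/s_{\bar k}^{n-1}$ shows that this transplantation does not increase perimeter. The isoperimetric inequality in the model then yields $P(E)\ge\ipm{\bar k}(w)$. Equality forces $J\equiv s_{\bar k}^{n-1}$ on $E$, hence constant curvature $\bar k$ inside $E$ and $E$ is a round ball. The delicate points we expect are the gradient term, which must be controlled by the comparison of angular metrics, and keeping the estimates stable under the limit $g_i\to g$.
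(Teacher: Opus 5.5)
Your Steps 1 and 2 are essentially the paper's argument: Step 1 is the generalized-existence reduction of Section~\ref{sec:main reduction} with strict subadditivity, and Step 2 corresponds to the small-volume graph and diameter results the paper quotes. Step 3, however, has a genuine gap, located exactly at the ``delicate point'' you flag. Monotonicity of $J/s_{\bar k}^{n-1}$, together with concavity of the model profile, gives the radial comparison $s_{\bar k}(\tilde\rho)^{n-1}\le J(\rho,\theta)$, and nothing more. Differentiating $\int_0^{\tilde\rho}s_{\bar k}^{n-1}\,dt=\int_0^{\rho}J(t,\theta)\,dt$ gives
\[
s_{\bar k}(\tilde\rho)^{n-1}\,d\tilde\rho=J(\rho,\theta)\,d\rho+\int_0^{\rho}d_\theta J(t,\theta)\,dt .
\]
So the tangential part of $P(\tilde E)$ contains two problematic terms. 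The first is the slope of $\rho$, amplified by the factor $J/s_{\bar k}(\tilde\rho)^{n-1}\ge 1$. On the $g$-side the matching term $J\,|\nabla\rho|_{g}$ is only bounded below by $s_{\bar k}(\rho)^{n-2}|d\rho|_\sigma$, because $Q\ge\sigma$ makes the gradient smaller, not larger. The second, and worse, is an angular derivative of $J$, about which monotonicity in $t$ says nothing.

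The claim that the transplantation does not increase perimeter is in fact false, already for $E=B^g_R$. Take $n=2$, $\bar k=0$, and $g=dt^2+J^2d\theta^2$ on $\mathbb{R}^2$ with $J=t\bigl(1+\epsilon\phi(\theta)\chi(t)\bigr)$ and $\chi(t)=\int_0^t\beta(\tau)\tau^{-2}\,d\tau$. Here $\beta$ is smooth and nondecreasing, with $\beta=0$ on $[0,R/2]$ and $\beta=1$ on $[R,\infty)$, and $0\le\phi\le1$. Then $J/t$ is nondecreasing and $K=-\partial_t^2J/J=-\epsilon\phi\beta'/(tJ)\in[-4\epsilon\|\beta'\|_\infty R^{-2},0]$, a bound that does not involve derivatives of $\phi$. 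Now choose $\phi=\frac12(1+\sin N\theta)$. Then $P_g(B^g_R)=2\pi R+\pi\epsilon R\chi(R)$ for every $N$. On the other hand $\tilde\rho^{\,2}=R^2+2\epsilon m\phi$ with $m=\int_0^R t\chi\,dt>0$, so $P(\tilde E)\ge\int_0^{2\pi}|\tilde\rho'|\,d\theta\ge 2N\epsilon m\,(R^2+2\epsilon m)^{-1/2}\to\infty$. Since the curvature bounds are uniform in $N$, neither the lower curvature bound nor small volume rescues the ray-by-ray transplantation.

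The paper never reparametrizes along rays. In Lemma~\ref{lem: isoperimetric inequality for normal graphs} it first treats $B^g_R$ by averaging. Let $\lambda$ be the $\sigma$-mean of $q(R,\cdot)=\sqrt{\det_\sigma Q_R}$. Then $P_g(B^g_R)=\lambda P_{\delta^{\bar k}}(B^{\delta^{\bar k}}_R)$ and $\vol_g(B^g_R)\le\lambda\vol_{\delta^{\bar k}}(B^{\delta^{\bar k}}_R)$, so concavity of the model profile gives the inequality for the ball. Equality forces $Q_R=\sigma$, i.e.\ $g$ is the model metric on $B^g_R$; the strict case is handled by continuity. In the equality case the paper compares $E$ with the region $E_{\bar k}$ bounded by the \emph{same} radial graph $R+u$ in the model. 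The eigenvalues of $(\det_\sigma Q)Q^{-1}$ are at least $1$, which yields a perimeter excess of at least $X/\sqrt2$. The volume excess lives only in the shell $\{u>0\}$ and is at most $\|u\|_{C^0}X$. The local Lipschitz bound of the model profile then closes the argument for small $\|u\|_{C^0}$. No angular derivative of $J$ ever appears. If you want a transplantation argument, it has to be this global one, with a single comparison ball carrying the total volume, not a ray-by-ray one.
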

\begin{remark}\label{CH - CAT}
Theorem \ref{thm: main theorem} follows from Theorem \ref{thm: CH in alexandrov} since any Cartan-Hadamard manifold satisfies the global condition of being $\textup{CAT}(\bar k)$. Moreover, any pointed GH-limit of a Cartan-Hadamard manifold with $\underline k \leq \Sec_g \leq \bar k \leq 0$ is an Alexandrov space with no Alexandrov boundary and curvature bounded above by $\bar k$ and below by $\underline k$. Indeed, any pointed GH-limit of $(X,d, p_i)$ is locally geodesically complete by \cite[Example 4.3]{lytchak2019geodesically} and \cite[\S 2.2]{lytchak2019geodesically}. This and an application of \cite[Proposition 3.13(iii)]{kapovitch2022structure} conclude the proof.
\end{remark}

Fix a volume $v > 0$. Since $(X,d)$ is a $\textup{RCD}((n-1)\underline k, n)$ space by Proposition \ref{rmk:alex}\eqref{rmk: Alex RCD}, we are under the assumptions of \cite[Theorem 1.1]{antonelli2022isoperimetric} which gives a minimizing sequence $\Omega_i\subset X$,  for any $i\in\N$, that can be split into diverging and converging pieces ($\Omega_i^d$ and $\Omega_i^c$, respectively) as we give a precise description below. We have

\begin{itemize}
    \item $\Omega_i^c\to \Omega_0\subset X$ in the sense of sets of finite perimeter and $\Omega_0$ is an (possibly empty) isoperimetric region {for its own volume $0\le\mathcal{H}_d^n(\Omega_0)\le v$}.
\end{itemize}

There exist a positive natural number $
\bar N$ and points $p_{i,j}\in X$, for $j\in\{1,\ldots, \bar N\}$, such that $(X, d, \cH^n_d, p_{i,j}) \to (X_j,d_j,\cH^n_{d_j}, p_j)$ in the pmGH sense, where each space $(X_j,d_j,\cH^n_{d_j})$ is an $\textup{RCD}((n-1)\underline{k}, n)$ space which is also  non-collapsing, see \cite[Theorem 1.2]{de2018non}. We have

\begin{itemize}
    \item $\Omega_{i,j}^d \to \Omega_j^\infty\subset X_j$ in the sense of sets of finite perimeter, $P_d(\Omega_{i,j}^d)\to P_{d_j}(\Omega_j^\infty)$, and $\Omega_j^\infty$ is an isoperimetric region in $(X_j,d_j)$ for any $j\in\{1\ldots, \bar N\}$. 
\end{itemize}

Moreover, the isoperimetric profile and the volume split also as follows:

\begin{itemize}
    \item $\ip{X}{d}(v) = P_d(\Omega_0) + \sum_{j=1}^{\bar{N}}P_{d_j}(\Omega_j^\infty) = \ip{X}{d}(\cH^n_d(\Omega_0)) + \sum_{j=1}^{\bar{N}}\ip{X_j}{d_j}(\cH^n_{d_j}(\Omega_j^\infty))$, and
    
    \item $v = \cH^n_d(\Omega_0) + \sum_{j=1}^{\bar{N}}\cH^n_{d_j}(\Omega_j^\infty)$.
\end{itemize}

Next, since $(X,d)$ is $\textup{CAT}(\bar k)$, we apply \cite[Proposition 2.1.1]{alexander2019invitation} to derive that $(X_j,d_j)$ is $\textup{CAT}(\bar k)$ as well for each $j\in\{1,\ldots,\bar N\}$. Since each $(X_j,d_j,\cH^n_{d_j})$ is also an $\textup{RCD}((n-1)\underline k, n)$ space, we can apply \cite[Theorem 5.1(a)]{kapovitch2020cd} to obtain that each $(X_j, d_j)$ is an Alexandrov space with the same curvature bounds of $(X,d)$ {and the noncollapsing constants}.

We show how to conclude the proof of Theorem \ref{thm: CH in alexandrov} using the proposition below whose proof is done in Section~\ref{sec:small volumes}.

\begin{proposition}\label{prop: reduction assuming existence}
Let $(Z, d_Z)$ be an Alexandrov space with {no Alexandrov boundary}, curvature bounded from above by $\bar k\leq 0$ and below by $\underline k$, and noncollapsing constant $v_{nc}>0$ as in Definition \ref{def:alex}. Then there exists $v_0 = v_0(v_{nc}, \underline k, \bar k, n)>0$ such that, for every isoperimetric region $E\subset Z$ of volume $v\in (0,v_0]$, we have 
$$P_{d_Z}(E) \geq \ipm{\bar k}(v).$$
Moreover, if equality holds, then $E$ is isometric to a geodesic ball in $\mathbb{M}^n_{\bar k}$.
\end{proposition}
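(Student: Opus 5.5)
The plan follows the overview in the introduction: show that small-volume isoperimetric regions in $Z$ are normal graphs over geodesic spheres, and then compare perimeters by hand in that setting, as in Lemma~\ref{lem: isoperimetric inequality for normal graphs}.

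\emph{Step 1: diameter bounds by rescaling.} Let $E$ be an isoperimetric region of volume $v$, and rescale $d_Z$ by $\lambda=v^{-1/n}$, so that $E$ has unit volume in $(Z,\lambda d_Z)$. The rescaled space has curvature between $\lambda^{-2}\underline k$ and $\lambda^{-2}\bar k$, so both bounds tend to $0$ as $v\to0$. Since $Z$ is $\textup{RCD}((n-1)\underline k,n)$ (Proposition~\ref{rmk:alex}\eqref{rmk: Alex RCD}) and non-collapsed, the usual density estimates for isoperimetric regions give uniform upper and lower volume-density bounds. Combined with an upper bound on the profile (for instance, comparing with small balls via Bishop--Gromov), these show that $\diam(E)\le C v^{1/n}$ and that $E$ has at most a bounded number of components. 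Strict subadditivity of the model profile (Lemma~\ref{lem: isop profile model spaces subadditive}) should then force connectedness once the inequality is known for each piece. In any case, $E$ lies in a ball of radius $Cv^{1/n}$.

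\emph{Step 2: blow-up and regularity.} Take a sequence $v_k\to0$ of isoperimetric regions and argue by contradiction. After rescaling, the spaces converge in pointed GH sense to a space that is both $\textup{CAT}(0)$ and $\textup{CBB}(0)$ without boundary and non-collapsed, and hence to $\mathbb R^n$. By Proposition~\ref{rmk:alex}\eqref{rmk: Alex C1alpha}, the metrics are $C^{1,\alpha}$/$W^{2,p}$, and in harmonic-type coordinates they converge to $\delta_{euc}$ in $C^{1,\alpha}$. The rescaled regions converge in $L^1$ to a Euclidean isoperimetric set, which is a unit-volume ball. Because the sets are almost-minimizers of perimeter in a $C^{1,\alpha}$ metric, Allard/De Giorgi-type $\varepsilon$-regularity upgrades this to $C^{1,\beta}$ convergence of the boundaries. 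Hence, for $v_k$ small, $\partial E$ is a normal graph of a function $u$ with small $C^{1,\beta}$ norm over a geodesic sphere $\partial B_r(x)$, where $x$ is a suitably chosen center (e.g.\ the barycenter in the $\textup{CAT}(0)$ sense) and $\vol(B_r(x))=v$. The constants depend only on $(v_{nc},\underline k,\bar k,n)$, which yields $v_0$.

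\emph{Step 3: comparison for graphs.} Here Lemma~\ref{lem: isoperimetric inequality for normal graphs} is applied. In polar coordinates around $x$, where $\exp_x$ is a global diffeomorphism by the $\textup{CAT}(\bar k)$ property and Proposition~\ref{rmk:alex}\eqref{rmk: Alex simply connected}, Rauch/$\textup{CAT}(\bar k)$ comparison gives that the Jacobian $J(t,\theta)$ satisfies $J(t,\theta)/s_{\bar k}(t)^{n-1}$ nondecreasing in $t$. For $\rho(\theta)=r+u(\theta)$ this gives $P(E)\ge\int_{S^{n-1}}J(\rho,\theta)\,d\theta$, since the graph area is at least the radial projection term, and $v=\int_{S^{n-1}}\int_0^{\rho}J\,dt\,d\theta$. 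For each fixed $\theta$, the monotonicity gives
\[
J(\rho,\theta)\ge \frac{s_{\bar k}(\rho)^{n-1}}{\int_0^\rho s_{\bar k}^{n-1}}\int_0^\rho J(t,\theta)\,dt .
\]
Summing over $\theta$ then reduces the claim to a one-dimensional inequality for the model profile, which follows from its concavity (Lemma~\ref{lem: isop profile model spaces subadditive}) via Jensen's inequality in the variable $V_{\bar k}(\rho(\theta))$. Equality forces $u$ to be constant and $J=s_{\bar k}^{n-1}$. This gives a constant-curvature ball; the metric is only $C^{1,\alpha}$, so this rigidity step would be handled through the smooth approximants of Proposition~\ref{rmk:alex}\eqref{rmk: Alex approximants}.

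\emph{Main obstacle.} I expect the hardest part to be Step 2: obtaining uniform graphicality over a geodesic sphere in the low-regularity Alexandrov setting, including the choice of center and uniform $\varepsilon$-regularity constants. Step 3 also has a delicate point: the per-ray inequality must combine with the volume constraint in the correct direction. My first approach there would be the Jensen argument above, applied to the function $w\mapsto \ipm{\bar k}(w)$ with $w=V_{\bar k}(\rho(\theta))$.
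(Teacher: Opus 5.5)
Your Steps 1 and 2 sketch the two inputs that the paper simply quotes after passing to the smooth approximants: graphicality over a geodesic sphere, and $\diam(E)\le Cv^{1/n}$ for small isoperimetric regions. The problem is Step 3. As you sketch it, it is not merely delicate; it fails.

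\textbf{Where Step 3 breaks.} The first inequality $P(E)\ge\int_{\mathbb S^{n-1}}J(\rho(\theta),\theta)\,d\theta$ discards the tangential term $|d\rho|$. The quantity that remains is in general strictly smaller than $\ipm{\bar k}(v)$. This already happens in the model space, where $J=s_{\bar k}^{n-1}$ and every later step of your chain is an equality. For example, in $\mathbb R^2$ take $\rho(\theta)=1+\epsilon\cos 2\theta$. Then $v=\pi(1+\epsilon^2/2)$, so
\[
\int_0^{2\pi}\rho\,d\theta=2\pi<2\pi\sqrt{1+\epsilon^2/2}=\ipm{0}(v)
\]
for every $\epsilon\neq 0$, however small. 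So the near-roundness from Step 2 does not help.

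The Jensen step shows the same problem. Put $a(\theta)=\int_0^{\rho(\theta)}J\,dt$. Your per-ray bound, together with $a\ge V_{\bar k}(\rho)/\omega_{n-1}$ and the monotonicity of $t\mapsto \ipm{\bar k}(t)/t$, gives
\[
P(E)\ge\omega_{n-1}^{-1}\int_{\mathbb S^{n-1}}\ipm{\bar k}\bigl(\omega_{n-1}a(\theta)\bigr)\,d\theta .
\]
For the \emph{concave} function $\ipm{\bar k}$, Jensen bounds this average from \emph{above} by $\ipm{\bar k}(v)$, not from below. The isoperimetric deficit of a non-round star-shaped set is carried by the gradient term you dropped. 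No rearrangement of the remaining one-dimensional quantities can close the argument, and your rigidity claim that $u$ is constant is likewise unsupported.

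\textbf{The missing idea.} Lemma~\ref{lem: isoperimetric inequality for normal graphs} \emph{transplants} $E$ to the model space rather than bypassing the isoperimetric inequality there.
\begin{enumerate}
\item Write $g=dr^2+s_{\bar k}(r)^2Q_r$. Hessian comparison gives the \emph{matrix} inequality $Q_r\ge\sigma$. This is stronger than your scalar monotonicity of $J/s_{\bar k}^{n-1}$, and it is needed: it controls $Q_\rho^{-1}(d\rho,d\rho)$ through the eigenvalues of $(\det_\sigma Q)\,Q^{-1}$.
\item With it, the full area integrand of the radial graph dominates that of the region $E_{\bar k}\subset\mathbb M^n_{\bar k}$ with the same radial function. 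The excess is at least a fixed multiple of $(q-1)s_{\bar k}^{n-1}$, where $q=\sqrt{\det_\sigma Q}$.
\item One then applies the model isoperimetric inequality to $E_{\bar k}$. This is what handles the non-roundness, gradient term included.
\item Finally, the increase $\ipm{\bar k}(\vol_g(E))-\ipm{\bar k}(\vol_{\delta^{\bar k}}(E_{\bar k}))$, caused by the volume gain $\int\int_0^{\rho}s_{\bar k}^{n-1}(q-1)\,dr\,d\sigma$, must be absorbed by that perimeter excess.
\end{enumerate}
The paper does the last step by separating two cases. If the comparison is already strict on $B^g_R$, it argues by continuity. If $g=\delta^{\bar k}$ on $B^g_R$, the volume gain is at most $\|u\|_{C^0}$ times the excess. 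This is where the $C^1$-smallness of $u$ and the Lipschitz bound on $\ipm{\bar k}$ enter. Your per-ray monotonicity is a reasonable ingredient for that last comparison, but only when measured against $E_{\bar k}$, not as a substitute for the model isoperimetric inequality.
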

By the construction above and this proposition, we derive that
\begin{equation*} \ip{X}{d}(v) = P_d(\Omega_0) + \sum_{j=1}^{\bar{N}}P_{d_j}(\Omega_j^\infty) \geq \ipm{\bar k}(\cH^n_d(\Omega_0)) + \sum_{j=1}^{\bar{N}}\ipm{\bar k}(\cH^n_{d_j}(\Omega_j^\infty)). \end{equation*}

Using that $ \ipm{\bar k}$ is subadditive by Lemma \ref{lem: isop profile model spaces subadditive}, and 
$$v= \cH^n_d(\Omega_0) + \sum_{j=1}^{\bar{N}}\cH^n_{d_j}(\Omega_j^\infty),$$ 
we obtain $ \ip{X}{d}(v) \geq  \ipm{\bar k}(v)$ which concludes the proof of Theorem \ref{thm: CH in alexandrov} assuming the validity of Proposition \ref{prop: reduction assuming existence}. 

In the equality case, since $ \ipm{\bar k}$ is \emph{strictly} subadditive by Lemma \ref{lem: isop profile model spaces subadditive}, there is a unique $J\in\{0,\ldots, \bar N\}$ such that $\cH^n_{d_J}(\Omega_J^\infty) \neq 0$ where we are setting $\Omega_0^\infty := \Omega_0$ and $d_0:=d$. Therefore, the equality case in Proposition \ref{prop: reduction assuming existence} applied to $\Omega_J^\infty$ concludes the proof.

\section{The conjecture for small normal graphs}

We show that the conjecture holds true for $C^1$ small normal graphs {over a $g$-geodesic ball} in the Euclidean space with a Cartan-Hadamard metric $g$. Here, we do not need a Ricci curvature lower bound and thus it settles the generalized Cartan-Hadamard conjecture in the context of $C^1$ small normal graphs in $(\R^n, g)$. To the best of our knowledge, this was not known. 

\begin{lemma}\label{lem: isoperimetric inequality for normal graphs}
Let $(\R^n, g)$ be a Cartan-Hadamard manifold with $\Sec_g\leq \bar k \leq 0$. For every $R > 0$, there exists $\epsilon = \epsilon(n,\underline k,\bar k, R)> 0$ with the following property. Let $E \subset \mathbb{R}^n$ be a domain such that $\partial E$ is given by the normal graph of $u \in C^1(\partial B_R^g,\mathbb{R})$ with respect to $g$ and $\|u\|_{C^1(\partial B_R^g)} \leq \epsilon$, then 
\begin{displaymath}
    P_{g}(E) \geq \ipm{\bar k}(\vol_g(E)).
\end{displaymath}

Furthermore, if equality holds, then $g \simeq \delta^{\bar k}$ on $E$.
\end{lemma}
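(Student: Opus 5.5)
Since $\partial E$ is a $C^1$-small normal graph over $\partial B_R^g$ (centered at some $p$), I will work in $g$-geodesic polar coordinates $(r,\theta)\in(0,\infty)\times S^{n-1}$ around $p$. On a Cartan-Hadamard manifold these are global, and $g=dr^2+h_r$. For $\epsilon$ small the normal graph over the sphere $\partial B_R^g$ is the radial graph $\{r=R+u(\theta)\}$, so $E=\{r<\rho(\theta)\}$ with $\rho=R+u$ and $\|u\|_{C^1}\le\epsilon$. Write $J(r,\theta)$ for the Jacobian of polar coordinates, so $d\vol_g=J\,dr\,d\theta$. Set $s(r):=s_{\bar k}(r)$ as in the proof of Lemma~\ref{lem: Aubin in model spaces}. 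Then
\[
\vol_g(E)=\int_{S^{n-1}}\int_0^{\rho(\theta)}J(r,\theta)\,dr\,d\theta,\qquad
P_g(E)=\int_{S^{n-1}}\sqrt{1+|\nabla^{h_\rho}\rho|^2}\,J(\rho(\theta),\theta)\,d\theta\ \ge\ \int_{S^{n-1}}J(\rho(\theta),\theta)\,d\theta .
\]
The key comparison, from Rauch/Günther under $\Sec_g\le\bar k$, is that $r\mapsto J(r,\theta)/s(r)^{n-1}$ is nondecreasing and tends to $1$ as $r\to0$. Equivalently $\partial_r\log J\ge (n-1)s'/s$, i.e.\ the mean curvature of geodesic spheres dominates the model one.

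I will reduce to a one-dimensional comparison per direction by comparing with the model ball of the same volume. Define $\Phi(t):=P_{\bar k}(V_{\bar k}^{-1}(t))=\ipm{\bar k}(t)$. Fix $\theta$ and let $j(r):=J(r,\theta)$ and $m(\rho):=\int_0^\rho j\,dr$. The monotonicity gives
\[
\frac{j(\rho)}{m(\rho)}\ \ge\ \frac{s(\rho)^{n-1}}{\int_0^\rho s^{n-1}},
\]
so each fiber already looks ``more isoperimetric'' than the model fiber. To integrate over $\theta$ I will use the concavity of $\ipm{\bar k}$ (Lemma~\ref{lem: isop profile model spaces subadditive}) together with a Jensen-type argument. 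Set $\omega:=\omega_{n-1}$ and define the normalized fiber volume $t(\theta):=\omega\, m(\rho(\theta))$. The monotonicity also gives the per-direction bound $\omega\, j(\rho(\theta))\ge \Phi(t(\theta))$: with $\sigma=s^{n-1}$, the function $G(\rho):=\omega j-\Phi(\omega m)$ satisfies $G(0)=0$ and $G'\ge0$. The latter follows from $j'/j\ge \sigma'/\sigma$ and from the relation $\Phi'(\omega m)\le \Phi'(\omega\!\int_0^\rho\sigma)=\sigma'/\sigma$, which itself follows from $m\ge\int\sigma$ and $\Phi'$ decreasing. Integrating over $\theta$ and applying Jensen with $\Phi$ concave gives
\[
P_g(E)\ \ge\ \frac1\omega\int \Phi(t(\theta))\,d\theta\ \ge\ \text{(lower bound?)}.
\]
Here Jensen goes the wrong way: concavity gives $\frac1\omega\int\Phi(t)\le\Phi(\bar t)$, where $\bar t$ is the average. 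This is the main obstacle. The radial nonuniformity must instead be paid for by the gradient term $\sqrt{1+|\nabla\rho|^2}$ in $P_g(E)$.

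For this step I will expand around $u\equiv0$. Write $t(\theta)=\bar t+\eta(\theta)$ with $\int\eta=0$. The Jensen defect is $-\tfrac12|\Phi''(\bar t)|\int\eta^2+O(\epsilon)\int\eta^2$, while the gradient term contributes at least $\tfrac12\int |\nabla^{h_\rho}u|^2 J\,(1-O(\epsilon))$. A Poincaré (Wirtinger) inequality on $(S^{n-1},h_R)$ bounds $\int\eta^2$ by $\int|\nabla u|^2$ up to factors involving $J(R,\cdot)$. The smallness of $\epsilon$, depending on $R,n,\bar k$ and on the bounds for $J$ controlled via $\underline k$, should absorb the error terms. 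This is exactly the second-variation stability of round spheres, and I expect it to be delicate: it needs the first eigenvalue of the link $(\partial B_R^g,h_R)$ compared to the model, which I would control using $\Sec\le\bar k$ (giving Lichnerowicz-type bounds for the metric on spheres of large curvature) together with $\Ric\ge(n-1)\underline k$.

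For the equality case, all the inequalities must be equalities. Then $\nabla\rho=0$, so $E$ is a geodesic ball, and $j/s^{n-1}$ is constant, so $J=s^{n-1}$ on $E$. Equality in Günther's comparison forces all radial sectional curvatures to equal $\bar k$ along every geodesic from $p$. By the standard rigidity of the Jacobi equation this gives $g=dr^2+s(r)^2 d\theta^2$ on $E$, i.e.\ $g\simeq\delta^{\bar k}$ there.
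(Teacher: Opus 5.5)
The fibre-by-fibre inequality $\omega J(\rho(\theta),\theta)\ge\Phi(t(\theta))$ is correct, but applying it before integrating throws away exactly what has to pay for the Jensen loss, and the proposed repair through the gradient term cannot work. The function $\eta=t-\bar t$ is not controlled by $u$. It contains the angular variation of $\theta\mapsto\omega\int_0^R J(r,\theta)\,dr$, which depends only on $g$.

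Take $u\equiv0$, so $E=B_R^g$, and a metric that is anisotropic about $p$. For instance, $n=2$, $\bar k=0$, $g=dx^2+\phi(x)^2dy^2$ with $\phi\equiv1$ for $x\le1$, $\phi$ convex with $\phi''>0$ somewhere in $(1,R)$, and $p=0$. Then $t$ is nonconstant and $\nabla u=0$, so strict concavity of $\Phi$ gives $\frac1\omega\int\Phi(t)\,d\theta<\Phi(\bar t)=\Phi(\vol_g(E))$. Your lower bound is already strictly below the target, with no gradient term left. In particular, your claimed Poincar\'e bound of $\int\eta^2$ by $\int|\nabla u|^2$ is false.

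The loss is not only an anisotropy effect. Take a rotationally symmetric $g$ that is not the model metric on $B_R^g$, and expand with $u=\epsilon Y_1$, where $Y_1$ is a first spherical harmonic. Your bracket $\Phi(\bar t)\int|\nabla u|^2_{h_R}-|\Phi''(\bar t)|\int\eta^2$ then has the sign of $\Phi(\vol_g(B_R^g))-P_g(B_R^g)<0$. So the chain falls below $\Phi(\vol_g(E))$ at order $\epsilon^2$. The positive gap $P_g(B_R^g)-\Phi(\vol_g(B_R^g))$ that should absorb this was discarded in the per-fibre step. Even in the model, the second variation of spheres is degenerate on first harmonics (translations). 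So absorbing $O(\epsilon)$ errors by smallness would at least require recentering, which a fixed centre $p$ does not allow.

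The missing idea is to average over $\theta$ before applying the profile, and to let the model isoperimetric inequality handle the shape of the graph. The paper writes $g=dr^2+s_{\bar k}(r)^2Q_r$ with $Q_r\ge\sigma$ and $\partial_rQ_r\ge0$ (Hessian comparison, a matrix form of your G\"unther monotonicity), and sets $q=\sqrt{\det_\sigma Q_r}$. For the ball, with $\lambda=\frac1\omega\int q(R,\theta)\,d\sigma\ge1$, one gets $P_g(B_R^g)=\lambda P_{\bar k}(R)$ and $\vol_g(B_R^g)\le\lambda V_{\bar k}(R)$. Then $\Phi(\lambda V)\le\lambda\Phi(V)$ is concavity used in the favourable direction.

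If this comparison is strict, small graphs follow by continuity. If it is an equality, then $Q_r=\sigma$ on $B_R^g$, and $E$ is compared with $E_{\bar k}=\{r<R+u(\theta)\}\subset\mathbb{M}^n_{\bar k}$:
the model isoperimetric inequality gives $P_{\delta^{\bar k}}(E_{\bar k})\ge\Phi(\vol_{\delta^{\bar k}}(E_{\bar k}))$ with no stability analysis; the inequality $(\det_\sigma Q)\,Q^{-1}\ge\sigma^{-1}$ gives $P_g(E)-P_{\delta^{\bar k}}(E_{\bar k})\ge X/\sqrt2$, where $X=\int s_{\bar k}(\rho)^{n-1}(q(\rho,\theta)-1)\,d\sigma$; and $0\le\vol_g(E)-\vol_{\delta^{\bar k}}(E_{\bar k})\le\|u\|_{C^0}X$.
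The Lipschitz bound on $\Phi$ then closes the estimate, and $X=0$ in the equality case. Note that this uses the matrix bound $Q_r\ge\sigma$ to compare the gradient term with the model one, rather than dropping it as you do.

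The same comparison in fact works for every $g$ at once, without the case split (whose continuity step makes $\epsilon$ depend on $g$). Replace $1/\sqrt2$ by $(1+z^2)^{-1/2}$ with $z=|du|_\sigma/s_{\bar k}(\rho)$. Then use $\int_0^\rho s_{\bar k}^{n-1}(q-1)\,dr\le(q(\rho)-1)\int_0^\rho s_{\bar k}^{n-1}\,dr$ together with $(n-1)\frac{s_{\bar k}'}{s_{\bar k}}(\rho)\int_0^\rho s_{\bar k}^{n-1}\,dr<s_{\bar k}(\rho)^{n-1}$.
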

\begin{proof}
In polar coordinates, the metric of the model space of curvature
$\bar k$ is $dr^2+s_{\bar k}(r)^2\sigma$, where $\sigma$ is the round metric on $\mathbb{S}^{n-1}$. We can write
\begin{equation}\label{eq: polar form of g}
    g=dr^2+s_{\bar k}(r)^2Q_r.
\end{equation}
It is easy to see that $Q_r\longrightarrow\sigma$ as $r\downarrow0$. By $\Sec_g \leq 0$ and the Hessian comparison {theorem} (cf. \cite[Chapter 6]{Petersen2016}) we have that $\partial_rQ_r\geq0$ and thus 
\begin{equation}\label{eq: Q greater than sigma}
    Q_r\geq\sigma.
\end{equation}

Define $q(r,\theta):=\sqrt{\det_\sigma Q_r(\theta)}$, equivalently $ d\vol_{Q_r} = q(r,\theta)d\vol_\sigma $. Then $q(r,\theta)\geq1$, and $r\mapsto q(r,\theta)$ is nondecreasing. 
Setting $\lambda := \frac{1}{\omega_{n-1}} \int_{\mathbb{S}^{n-1}}q(R,\theta)\,d\sigma \geq 1$, we have
\begin{equation}\label{eq: perimeter of geodesic ball}
    P_g(B_R^g)
    =
    \lambda P_{\delta^{\bar k}}(B_R^{\delta^{\bar k}}).
\end{equation}
Moreover, the monotonicity of $q$ gives
\begin{align}
    \operatorname{vol}_g(B_R^g)
    \leq
    \int_{\mathbb{S}^{n-1}}q(R,\theta)\,d\sigma
    \int_0^R s_{\bar k}(r)^{n-1}\,dr
    =
    \lambda
    \vol_{\delta^{\bar k}}(B_R^{\delta^{\bar k}}).
    \label{eq: volume of geodesic ball}
\end{align}

We use that $\ipm{\bar k}(v)$
is strictly concave to get $\ipm{\bar k}(tv)\leq t \ipm{\bar k}(v)$ for $t\geq 1$ with equality if, and only if, $t=1$ (cf. \eqref{eqn: isop profile inequality}). Next, combining this, the fact that $\ipm{\bar k}(v)$ is increasing, \eqref{eq: perimeter of geodesic ball}, and
\eqref{eq: volume of geodesic ball}, we obtain
\begin{align}
    \ipm{\bar k}\bigl(\operatorname{vol}_g(B_R^g)\bigr)
    &\leq
    \ipm{\bar k}\left(
        \lambda\vol_{\delta^{\bar k}}(B_R^{\delta^{\bar k}})
    \right)
    \notag\\
    &\leq
    \lambda \ipm{\bar k}\bigl(
        \vol_{\delta^{\bar k}}(B_R^{\delta^{\bar k}})
    \bigr)
    \notag\\
    &=
    \lambda P_{\delta^{\bar k}}(B_R^{\delta^{\bar k}})
    =
    P_g(B_R^g).
    \label{eq: comparison for original ball}
\end{align}

If in \eqref{eq: comparison for original ball} {one of the inequalities} is strict, then the conclusion
follows for every sufficiently small $C^1$ normal graph. Indeed, let $2\delta := P_g(B_R^g) - \ipm{\bar k}\bigl(\operatorname{vol}_g(B_R^g)\bigr)>0$ and $\|u\|_{C^1} < \epsilon$, then $|\vol_g(B_R^g) - \vol_g(E)| <\epsilon$ and, by continuity of $\ipm{\bar k}$ provided $\epsilon$ is small, we obtain $|\ipm{\bar k}(\vol_g(B_R^g)) - \ipm{\bar k}(\vol_g(E))| < \delta $. Similarly, we argue that $|P_g(B_R^g) - P_g(E)|<\delta$ and conclude the proof in this case. 

We are therefore left with the
equality case. Since $\ipm{\bar k}(v)$ is strictly concave (cf. \eqref{eqn: isop profile inequality} and the comment above), equality in
\eqref{eq: comparison for original ball} forces $\lambda=1$. Since
$q(R,\theta)\geq1$, this implies $q(R,\theta)=1$ for any $\theta$.
The eigenvalues of $Q_R$ relative to $\sigma$ are all at least one by
\eqref{eq: Q greater than sigma}, while their product is $\det_\sigma Q_R=q(R,\theta)^2=1$. Therefore every eigenvalue equals one and $Q_R=\sigma$. The
monotonicity of $Q_r$ now gives
\begin{equation*}
    g=dr^2+s_{\bar k}(r)^2\sigma
    \qquad\text{on }B_R^g.
\end{equation*}

Let $E_{\bar k}\subset \mathbb{M}^n_{\bar k}$ be the region with the same radial graph $\partial E = \{(R+u(\theta),\theta):\theta\in\mathbb{S}^{n-1}\}$ but in the model
space. Then the difference between the volumes of $E$ and $E_{\bar k}$ is the corresponding metrics occurs only where $u>0$. Using the monotonicity of
$s_{\bar k}$ and $q$, we obtain
\begin{equation}\label{eq: volume excess}
\begin{aligned}
    0
    &\leq
    \operatorname{vol}_g(E)
    -
    \vol_{\delta^{\bar k}}(E_{\bar k})
    \\
    &=
    \int_{\{u>0\}}\int_R^{R+u(\theta)}
    s_{\bar k}(r)^{n-1}
    \bigl(q(r,\theta)-1\bigr)\,dr\,d\sigma
    \\
    &\leq
    \lVert u\rVert_{C^0}X,
\end{aligned}
\end{equation}
where $X := \int_{\bbS^{n-1}} s_{\bar k}(R+u(\theta)) (q(R+u(\theta),\theta) - 1)d\sigma$. The metric induced by \eqref{eq: polar form of g} on the graph
$r=R+u(\theta)$ is $s_{\bar k}(R+u)^2Q_{R+u}+du\otimes du$. The matrix determinant lemma therefore gives
\begin{align}
    P_g(E)
    =
    \int_{\mathbb{S}^{n-1}}
    &s_{\bar k}(R+u)^{n-1}q(R+u,\theta)
    \sqrt{
        1+
        s_{\bar k}(R+u)^{-2}
        Q_{R+u}^{-1}(du,du)
    }\,d\sigma.
    \label{eq: graph area}
\end{align}

The eigenvalues of $Q_{R+u}$
relative to $\sigma$ are $\mu_i\geq1$ and then we have that the eigenvalues of $\bigl(\det_\sigma Q_{R+u}\bigr)Q_{R+u}^{-1}$ are $\prod_{j\neq i}\mu_j\geq1$.

It follows from \eqref{eq: graph area} that
\[
    P_g(E)
    \geq
    \int_{\mathbb{S}^{n-1}}
    s_{\bar k}(R+u)^{n-1}
    \sqrt{
        q(R+u,\theta)^2
        +
        \frac{|du|_\sigma^2}{s_{\bar k}(R+u)^2}
    }\,d\sigma.
\]

On the other hand,
\[
    P_{\delta^{\bar k}}(E_{\bar k})
    =
    \int_{\mathbb{S}^{n-1}}
    s_{\bar k}(R+u)^{n-1}
    \sqrt{
        1+
        \frac{|du|_\sigma^2}{s_{\bar k}(R+u)^2}
    }\,d\sigma.
\]

If $\lVert u\rVert_{C^1}$ is sufficiently small, then $\frac{|du|_\sigma}{s_{\bar k}(R+u)}\leq1$. For $0\leq z\leq1 \leq q$, we have that
$\sqrt{q^2+z^2}-\sqrt{1+z^2} = \int_1^q\frac{t}{\sqrt{t^2+z^2}}\,dt \geq \frac{q-1}{\sqrt{2}}$. Applying the last two displayed inequalities together with the latter estimate gives
\begin{equation}\label{eq: perimeter excess}
    P_g(E)
    \geq
    P_{\delta^{\bar k}}(E_{\bar k})
    +
    \frac{1}{\sqrt{2}}X \geq \ipm{\bar k}\bigl(
        \vol_{\delta^{\bar k}}(E_{\bar k})
    \bigr) +
    \frac{1}{\sqrt{2}}X .
\end{equation}

Finally, since $\ipm{\bar k}$ is locally Lipschitz and increasing on $(0,\infty)$ and by \eqref{eq: volume excess}, there is a constant
$C=C(n,\bar k,R)>0$ such that
\begin{equation}   \label{eq: profile excess}
\begin{aligned}
    \ipm{\bar k}\bigl(\operatorname{vol}_g(E)\bigr)
    -
    \ipm{\bar k}\bigl(
        \vol_{\delta^{\bar k}}(E_{\bar k})
    \bigr)
    &\leq
    C\left(
        \operatorname{vol}_g(E)
        -
        \vol_{\delta^{\bar k}}(E_{\bar k})
    \right)
    \\
    &\leq
    C\lVert u\rVert_{C^0}X.
\end{aligned}
\end{equation}

Combining \eqref{eq: perimeter excess} and
\eqref{eq: profile excess}, we conclude that
\begin{align}\label{eq : perimeter excess}
    P_g(E)
    \geq
    \ipm{\bar k}\bigl(\operatorname{vol}_g(E)\bigr)
    +
    \left(
        \frac{1}{\sqrt{2}}
        -
        C\lVert u\rVert_{C^0}
    \right)X.
\end{align}

Choosing $\varepsilon>0$ sufficiently small concludes the proof of the inequality. 

For the rigidity statement, suppose that $P_g(E)=\ipm{\bar k}\bigl(\operatorname{vol}_g(E)\bigr)$ and choose \(\varepsilon>0\) sufficiently small so that $\frac{1}{\sqrt{2}}-C\lVert u\rVert_{C^0}>0$. Then \eqref{eq : perimeter excess} implies that \(X=0\) and, consequently, we get $q(R+u(\theta),\theta)=1$ for every $\theta$.

The eigenvalues of
\(Q_{R+u(\theta)}\) relative to \(\sigma\) are at least one by \eqref{eq: Q greater than sigma}, while their product is $\det_{\sigma}Q_{R+u(\theta)} = q(R+u(\theta),\theta)^2 =1$. Thus every eigenvalue is equal to one and $Q_{R+u(\theta)}=\sigma$. 

Since \(r\mapsto Q_r\) is nondecreasing and \(Q_R=\sigma\), for every
\(\theta\) with \(u(\theta)>0\) and every \(R\leq r\leq R+u(\theta)\) we have $\sigma=Q_R \leq Q_r \leq Q_{R+u(\theta)}=\sigma$. Therefore $Q_r=\sigma$ for $R\leq r\leq R+u(\theta)$. If \(u(\theta)\leq0\), the entire radial segment
\(0\leq r\leq R+u(\theta)\) lies in \(B_R^g\), where \(Q_r=\sigma\). Hence
\begin{equation}\label{eq: isometry on E}
g=dr^2+s_{\bar k}(r)^2\sigma
\qquad\text{on }E.
\end{equation}

Moreover, by \(X=0\), \eqref{eq: volume excess}, \eqref{eq: perimeter excess}, \eqref{eq: profile excess}, and \eqref{eq : perimeter excess}, we have $\vol_g(E)
= \vol_{\delta^{\bar k}}(E_{\bar k})$ and $P_g(E) = P_{\delta^{\bar k}}(E_{\bar k}) = \ipm{\bar k}\bigl(\vol_{\delta^{\bar k}}(E_{\bar k})\bigr)$.  Therefore \(E_{\bar k}\) is an isoperimetric region in
\((\mathbb M_{\bar k}^n,\delta^{\bar k})\), and hence it is a geodesic ball. Consequently, by \eqref{eq: isometry on E}, \((E,g)\) is isometric to a geodesic ball in \((\mathbb M_{\bar k}^n,\delta^{\bar k})\), which proves the rigidity statement.

\end{proof}

\section{Proof for small volumes}\label{sec:small volumes}

Here we prove Proposition \ref{prop: reduction assuming existence} (which establishes Theorem \ref{thm: main theorem}, as explained in Section \ref{sec:main reduction}), under the small volume assumption.

First of all, notice that we can assume $g_Z\in C^\infty$ since the perimeter and volume are continuous in the $C^0$-topology and the approximants converge in $C^{1,\alpha}$-topology and the curvature bounds are preserved with nonpositive upper bound, cf. Proposition \ref{rmk:alex}\eqref{rmk: Alex approximants}. Therefore, we can work under the assumptions that $(Z,g_Z)$ is a Cartan-Hadamard manifold with Ricci lower bound, i.e., a complete simply-connected smooth Riemmannian manifold with $\underline k \leq \Sec_{g_Z} \leq \bar k \leq 0$. 

We prove that there exists $v_0 = v_0({v_{nc}}, \underline k,\bar k, n)>0$ such that if $0<v\leq v_0$ then the proposition holds. Let $E$ be an isoperimetric region of volume $v$.

By \cite[Lemma 3.2]{nardulli2014isoperimetric}, up to choosing $v_0$ small enough depending on $v_{nc}, \underline k, \bar k, n$, we have that $\partial E$ is given by the normal graph of a $C^{2,\alpha}$ function $u$ defined on $\partial B$ for some geodesic ball $B$. We also recall that, by \cite[Lemma 4.9]{nardulli2020sharp}, there exist $v_1$ and $C$ depending on ${v_{nc}},\underline k, \bar k, n$ such that, if $v_0\leq v_1$, we obtain
\begin{equation}\label{eqn: small diam small vol}
    \diam_{g_Z}(E) \leq Cv^{1/n}.
\end{equation}

By \eqref{eqn: small diam small vol}, we are in a position to apply Lemma \ref{lem: isoperimetric inequality for normal graphs} with $\tilde E := (\exp_p)^{-1}(E_i)$ and $\tilde g = (\exp_p^{-1})_*g_Z$ to obtain

$$P_{g_Z}(E) = P_{\tilde g}(\tilde E) \geq \ipm{\bar k}(\vol_{\tilde g }(\tilde E))= \ipm{\bar k}(\vol_{g_Z}(E)).$$

Moreover, in the equality case $g_Z\simeq \delta^{\bar k}$ on $E$. This concludes the proof of Proposition \ref{prop: reduction assuming existence}.

\section{Reduction to the case with lower Ricci curvature bound}\label{sec: Ricci}

In this section, we show that it suffices to prove \Cref{thm: assuming equality sem Ricci lower bound} under the additional assumption $\mathrm{Ric}_g \geq (n-1)\underline k$. Namely, we show how Proposition~\ref{thm: assuming equality in CH} implies Theorem~\ref{thm: assuming equality sem Ricci lower bound}. We then prove Proposition~\ref{thm: assuming equality in CH} in Section~\ref{sec:arbitrary volumes}.

\begin{proposition}\label{thm: assuming equality in CH}
Let $(M^n,g)$ be a Cartan-Hadamard manifold with $\Sec_g \leq \bar k \leq 0$ and $\mathrm{Ric}_g \geq (n-1)\underline k$. Assume in addition that Assumption \ref{assump:equality case} holds. Then we have
\begin{equation*}
    \ip{M^n}{g}(v)\geq \ipm{\bar k}(v)\text{ for any }v\geq 0.
\end{equation*}
\end{proposition}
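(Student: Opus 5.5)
We would prove the inequality for every volume by a continuity (open–closed) argument in $v$. The initial step is supplied by Theorem \ref{thm: CH in alexandrov}, and Assumption \ref{assump:equality case} is what lets the argument pass through a threshold volume.

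As in Section \ref{sec:main reduction}, we work directly in the class $\mathcal{C}$ of non-collapsing Alexandrov spaces without Alexandrov boundary and with curvature between $\underline k$ and $\bar k$. This class contains $(M,g)$ by Remark \ref{CH - CAT}, and it is closed under the pointed limits produced by \cite[Theorem 1.1]{antonelli2022isoperimetric}. Define
\[
v^*:=\sup\{V>0:\ \ip{Y}{d_Y}(v)\ge \ipm{\bar k}(v)\ \text{for all }(Y,d_Y)\in\mathcal{C},\ v\in(0,V]\}.
\]
Theorem \ref{thm: CH in alexandrov} gives $v^*\ge v_0>0$. We need $v_0$ uniform over the class, which holds because it depends only on $v_{nc},\underline k,\bar k,n$; if $v_{nc}$ varies, we fix the value inherited from $M$. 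Assume for contradiction that $v^*<\infty$.

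\textbf{Closedness at $v^*$.} Both profiles are continuous: $\ipm{\bar k}$ explicitly, and $\ip{Y}{d_Y}$ because RCD profiles with non-collapsing are continuous. Hence the inequality holds at $v^*$ itself.

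\textbf{Reduction to honest isoperimetric regions.} For $v$ slightly larger than $v^*$, apply the generalized existence theorem exactly as in Section \ref{sec:main reduction}. This writes $\ip{Y}{d_Y}(v)$ as a sum of perimeters of isoperimetric regions $\Omega_j$ in limit spaces $Y_j\in\mathcal{C}$, with volumes $v_j$ summing to $v$. If at least two pieces have positive volume, each $v_j<v$. In that case, either all $v_j\le v^*$, and the claim follows from the previous step together with strict subadditivity (Lemma \ref{lem: isop profile model spaces subadditive}), or we iterate. Iteration is controlled because the pieces have volume bounded below. So it suffices to treat a single isoperimetric region $E$ in some $Y\in\mathcal{C}$, of volume $v\in(v^*,v^*+\eta)$.

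\textbf{Openness, the main obstacle.} Suppose there are spaces $Y_i\in\mathcal{C}$ and isoperimetric regions $E_i\subset Y_i$ with volumes $v_i\downarrow v^*$ and $P(E_i)<\ipm{\bar k}(v_i)$. Recenter at points of $E_i$ and pass to a pmGH limit $Y_\infty\in\mathcal{C}$. We would use the uniform density and diameter estimates for isoperimetric regions under a Ricci lower bound (as in \cite{nardulli2020sharp}) to show that the $E_i$ converge, with no loss of volume, to an isoperimetric region $E_\infty$ of volume $v^*$. By lower semicontinuity of perimeter and the closedness step, $P(E_\infty)=\ipm{\bar k}(v^*)$.

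Assumption \ref{assump:equality case} now says that $E_\infty$ is isometric to a model ball $B^{\delta^{\bar k}}_R$. We would then argue that, for large $i$, $E_i$ is a small normal graph over a geodesic ball of $Y_i$. The intended route is the $C^{1,\alpha}$ convergence of the approximating smooth metrics (Proposition \ref{rmk:alex}\eqref{rmk: Alex approximants}) combined with regularity of isoperimetric boundaries in $C^{1,\alpha}$ metrics, that is, Allard-type convergence of $\partial E_i$ to the round sphere $\partial E_\infty$. Lemma \ref{lem: isoperimetric inequality for normal graphs}, applied at the fixed radius $R$ with its $\epsilon(n,\underline k,\bar k,R)$, then gives $P(E_i)\ge\ipm{\bar k}(v_i)$, which is a contradiction.

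The delicate points are the following:
\begin{itemize}
\item upgrading metric convergence to graphical $C^1$ convergence of the boundaries in the nonsmooth setting;
\item making sure the ball $E_\infty$ sits in a region of $Y_i$ where Lemma \ref{lem: isoperimetric inequality for normal graphs} applies uniformly.
\end{itemize}
We expect the first of these to be the hardest step. Once the openness step is in place, $v^*=\infty$, which proves the proposition.
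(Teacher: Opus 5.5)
Your overall architecture matches the paper's. You run an open–closed argument in $v$, reduce to honest isoperimetric regions via generalized existence, use Assumption \ref{assump:equality case} to identify the limit as a model ball, use regularity to get graphicality, and conclude with Lemma \ref{lem: isoperimetric inequality for normal graphs}. Defining $v^*$ uniformly over the class $\mathcal{C}$ is a clean substitute for the paper's fixed-space set $S$ combined with the profile comparison for limit spaces from \cite[Proposition 2.19]{antonelli2022isoperimetric}. The cost is that you need compactness along a sequence of different spaces $Y_i$ rather than in a fixed one.

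However, the openness step has a genuine gap. You claim that, after recentering, the $E_i$ converge with no loss of volume to a single region $E_\infty$ of volume $v^*$, citing density and diameter estimates. The diameter bound $\diam(E)\le Cv^{1/n}$ of \cite{nardulli2020sharp} is only available below a small volume threshold. At $v^*\ge v_0$, which may be large, no such bound exists. Isoperimetric regions in Cartan--Hadamard manifolds can be disconnected \cite{Hass2016}, and along a sequence their components can drift apart. Density estimates only exclude vanishing: they split $E_i$ into finitely many clusters whose mutual distances may diverge. Each cluster converges in its own pointed limit $Y_\infty^{(j)}\in\mathcal{C}$ to some $F_j$, with $\sum_j\cH^n(F_j)=v^*$. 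A single recentering sees only one cluster. So neither $\vol(E_\infty)=v^*$ nor the subsequent use of the closedness step and of Assumption \ref{assump:equality case} is justified.

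The missing ingredient is ruling out this splitting by strict subadditivity, which is the paper's Claim 1; your setup supports it directly.
\begin{enumerate}
\item Lower semicontinuity gives $\sum_j P(F_j)\le\liminf_i P(E_i)\le\ipm{\bar k}(v^*)$.
\item If there were at least two clusters, each $w_j:=\cH^n(F_j)$ would satisfy $w_j<v^*$. By your class-wide definition of $v^*$, this gives $P(F_j)\ge\ipm{\bar k}(w_j)$.
\item Strict subadditivity (Lemma \ref{lem: isop profile model spaces subadditive}) then gives $\sum_j\ipm{\bar k}(w_j)>\ipm{\bar k}(v^*)$, a contradiction.
\end{enumerate}
Only after this does $E_\infty=F_1$ have volume $v^*$ and perimeter $\ipm{\bar k}(v^*)$.

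Separately, the regularity step (\cite{nardulli2018regularity} or Allard) only presents $\partial E_i$ as a small graph over $\Psi_i(\partial E_\infty)$. That is the image of the model sphere under the almost-isometries, not a geodesic sphere of $Y_i$. To invoke Lemma \ref{lem: isoperimetric inequality for normal graphs} you still need the content of the paper's Claim 2. This is a geodesic comparison showing that this image is $C^1$-close to $\partial B_R(p_i)$ in $Y_i$, plus a degree argument to get a single-valued graph.

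Finally, the ``iteration'' in your reduction step is unnecessary. Every piece with volume in $(v^*,v^*+\eta)$ is already an honest isoperimetric region in a space of $\mathcal{C}$.
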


\subsection{The inequality} Assuming the validity of \Cref{thm: assuming equality in CH}. Let $(M^n,g)$ be as in Theorem \ref{thm: assuming equality sem Ricci lower bound}, $\Omega$ be a bounded set in $M^n$, and choose a big enough $R>0$ and $p\in\Omega$ such that $\Omega \Subset B_R^g(p)$. We will show below that there exists a metric $\hat g$ on $M^n$ such that $g = \hat g$ on $B_R^g(p)$ and $(M^n, \hat g)$ is under the assumptions of \Cref{thm: assuming equality in CH}. Applying it and using $g=\hat g$ on $B_R^g(p) \Supset \Omega$ yield 
\begin{equation}\label{1}
P_g(\Omega) = P_{\hat g}(\Omega) \geq \ipm{\bar k}(\vol_{\hat g}(\Omega)) = \ipm{\bar k}(\vol_{g}(\Omega)).
\end{equation} 
Recalling that the isoperimetric profile defined in \eqref{eq:IsoperimetricProfileDfefinition} can be equivalently defined by taking the infimum over bounded smooth sets of the fixed volume by \cite[Theorem 1]{NardulliFlores2020} and applying \eqref{1} to a minimizing sequence $\Omega_j$ of volume $v>0$, we obtain 
$$\ip{M^n}{g}(v) = \lim_{j\to +\infty}P_g(\Omega_j) \geq \ipm{\bar k}(v).$$

We now prove the existence of such a metric $\hat g$ to conclude the reduction by basically adapting the argument in Proposition \ref{rmk:alex}\eqref{rmk: Alex approximants}. Indeed, we fix a function $f:[0,R+1]\to [0,+\infty)$ such that
$$
f\in C^\infty(0,R+1), \ f\equiv 0\text{ on } [0,R], \ f(R+1) = 1, \text{ and }f^{''} \geq \sqrt{-\bar k}f^{'} >0\text{ on }(R,R+1).$$

Fix $r(\cdot):= d_g(p,\cdot)$ and define $\tilde g := (1 - f\circ r)^{-2} g$. Notice that, by the Hessian comparison theorem, we have $\mathrm{Hess}_g r \geq \sqrt{-\bar k}(g - dr\otimes dr)$ and therefore
\begin{equation}\label{eqn: hess}
\mathrm{Hess}_g (f\circ r) = f^{''}dr\otimes dr + f^{'}\mathrm{Hess}_g r \geq \sqrt{-\bar k}f^{'}g.
\end{equation}
By a direct computation, for any $q\in B_{R+1}^g(p)$ and any plane $\Pi := \mathrm{span}\{v,w\}\subset T_qM$, we obtain that
\begin{equation}\label{eqn: sec conformal}
\begin{aligned}
    \Sec_{\tilde g}(\Pi) &= (1-f\circ r)^2\Sec_g(\Pi) - (1-f\circ r)\bigl(\mathrm{Hess}_g(f\circ r)(v,v) \bigr.\\
    &\quad \bigl. + \mathrm{Hess}_g (f\circ r)(w,w) \bigr)- |\nabla^g (f\circ r)|_g^2.
\end{aligned}
\end{equation} 
Using \eqref{eqn: hess}, the fact that $|Dr|_g = 1$, and the bounds on $f, f^{'}$ and $f^{''}$, we obtain 
$$ \Sec^{\tilde g} \leq \bar k - 2\sqrt{-\bar k}f^{'} - (f^{'}\circ r)^2 \leq \bar k.$$
The lower bound is a simple consequence of \eqref{eqn: sec conformal}. Indeed, set
$$C := \textup{sup}_{\overline{B_{R+1}^g(p)}}\left( |\Sec_g| + |\nabla^g (f\circ r)|_g^2, \|\mathrm{Hess}_g(f\circ r)\|_g\right), $$
by \eqref{eqn: sec conformal}, we readily obtain $\Sec_{\tilde g} \geq -C$. We also notice that, since $(1-f\circ r(x))^{-2}$ explodes to $+\infty$ as $x$ approaches $\partial B_{R+1}^g(p)$, $(B_{R+1}^g(p), \tilde g)$ is complete and therefore it's a Cartan-Hadamard manifold.

Define $s:[0,R+1)\to [0,+\infty)$ and $s(t):= \int_0^t\frac{1}{1-f(u)}du$, then $s^{'}(t) > 0$ for any $t\in [0,R+1)$. Finally, set $\rho:= s^{-1}$ and
$$\Phi:  M\to B_{R+1}^g(p) \text{ by }\Phi(\exp_p^g(tv)) = \exp_p^g(s^{-1}(t)v),$$
for any $v\in \{T_p M:|v|_g = 1\}$ and $t\geq 0$. It's easy to see that $\Phi$ is a diffeomorphism and $\Phi(x) = x$ for any $x\in B^g_R(p)$. Setting $\hat g := \Phi^* \tilde g$, we have that $(M^n, \hat g)$ is now under the assumptions of \Cref{thm: assuming equality in CH}. Applying it we conclude the proof of the inequality.

\subsection{Proof of Corollary \ref{cor: isop region bdd}}\label{ssec: bdd}

Since we have already established the inequality \eqref{eqn: conjecture sem Ricci arbitrary vol} of \Cref{thm: assuming equality sem Ricci lower bound} (assuming \Cref{thm: assuming equality in CH} that we prove later), we can replicate the very same proof of \cite[Theorem 3]{nardulli2014generalized} by the third-named author. Indeed, the author only uses the Ricci lower bound to apply the isoperimetric inequality for small volumes, namely, the results \cite[Theorems 2.3 and 2.4]{nardulli2014generalized}. This can now be replaced by our \eqref{eqn: conjecture sem Ricci arbitrary vol} and the argument proceeds verbatim.

\subsection{The equality case}

Assuming the validity of Proposition~\ref{thm: assuming equality in CH}. Let $E$ be a finite perimeter set in $M^n$ with $\vol_g(E) = v \in (0,v_0]$ and $P_g(E) = \ipm{\bar k}(v)$.

By Corollary \ref{cor: isop region bdd}, $E$ is bounded. Then we apply the same argument above with $\Omega = E$ to obtain $\hat g$ under the assumptions of Proposition~\ref{thm: assuming equality in CH} and with $g = \hat g$ on $B_R^g(p) \Supset E$. By Assumption~\ref{assump:equality case} applied to $E$ in $(M^n, \hat g)$, we obtain that $(E,\hat g)$ is isometric to a geodesic ball in $\mathbb{M}^n_{\bar k}$. Since $\hat g = g$ on $B_R^g(p)$, we conclude the proof.

\section{On the arbitrary volume regime}\label{sec:arbitrary volumes}

We now prove Proposition~\ref{thm: assuming equality in CH} which implies Theorem~\ref{thm: assuming equality sem Ricci lower bound} as explained in Section~\ref{sec: Ricci}. By the exact same generalized existence argument in Section~\ref{sec:main reduction}, it suffices to prove the following statement in the more general setting of Alexandrov spaces.

\begin{proposition}\label{thm: assuming equality}
Let $(Z, d_Z)$ be an Alexandrov space with {no Alexandrov boundary}, curvature bounded from above by $\bar k\leq 0$ and below by $\underline k$, and noncollapsing constant $v_{nc}>0$ as in Definition \ref{def:alex}. Suppose in addition that Assumption~\ref{assump:equality case} holds. Then, for every isoperimetric region $E\subset Z$ of positive volume $v$, we have 
$$P_{d_Z}(E) \geq \ipm{\bar k}(v).$$
\end{proposition}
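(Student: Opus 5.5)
The plan is a continuity argument in the volume. The base case is Proposition~\ref{prop: reduction assuming existence}; the induction step is a compactness-plus-rigidity argument that uses Assumption~\ref{assump:equality case} to force near-critical isoperimetric regions to be small normal graphs, where Lemma~\ref{lem: isoperimetric inequality for normal graphs} applies.

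Fix $n$, $\underline k\le \bar k\le 0$ and $v_{nc}>0$. Let $\mathcal{A}$ be the class of Alexandrov spaces without boundary, with curvature in $[\underline k,\bar k]$ and noncollapsing constant $v_{nc}$. Define
\[
v^\ast:=\sup\bigl\{w>0:\ P_{d_Z}(E)\ge \ipm{\bar k}(\cH^n_{d_Z}(E)) \text{ for all } Z\in\mathcal{A} \text{ and isoperimetric } E\subset Z \text{ with } \cH^n_{d_Z}(E)\le w\bigr\}.
\]
By Proposition~\ref{prop: reduction assuming existence}, $v^\ast\ge v_0>0$. Running the generalized-existence argument of Section~\ref{sec:main reduction} (which only uses the inequality for isoperimetric regions of volume at most $v$, together with subadditivity from Lemma~\ref{lem: isop profile model spaces subadditive}), we also get $\ip{Z}{d_Z}(w)\ge \ipm{\bar k}(w)$ for all $w\le v^\ast$ and all $Z\in\mathcal{A}$. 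It remains to show $v^\ast=\infty$.

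Suppose instead $v^\ast<\infty$. Then there are $Z_i\in\mathcal{A}$ and isoperimetric regions $E_i\subset Z_i$ with $v_i:=\cH^n(E_i)\downarrow v^\ast$ (or $v_i=v^\ast$) and $P(E_i)<\ipm{\bar k}(v_i)$. Choose base points in $E_i$ and pass to a pmGH limit $Z_\infty$. The class $\mathcal{A}$ is closed under such limits: Proposition~\ref{rmk:alex}, \cite{kapovitch2020cd}, \cite{de2018non} and Remark~\ref{CH - CAT} give this exactly as in Section~\ref{sec:main reduction}.

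Apply the concentration-compactness of \cite{antonelli2022isoperimetric} to the sequence $E_i$ in varying spaces. The diameter bounds for isoperimetric regions under a Ricci lower bound and noncollapsing cover the pieces that stay at bounded distance, and the rest splits into finitely many pieces in limit spaces $Y_j\in\mathcal{A}$. Each limit piece is isoperimetric, the volumes add up to $v^\ast$, and the sum of the perimeters is at most $\liminf P(E_i)\le\ipm{\bar k}(v^\ast)$.

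Each piece has volume at most $v^\ast$, so the inequality holds for it. Strict subadditivity then forces a single nonempty piece $F\subset Y$ with $P(F)=\ipm{\bar k}(v^\ast)$. Assumption~\ref{assump:equality case} now says $F$ is isometric to a model ball $B^{\delta^{\bar k}}_{R}$, with $R$ determined by $v^\ast$. The same subadditivity argument also shows that no mass is lost: $E_i$ converge to $F$ with convergence of perimeters.

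The key step, and the main obstacle, is upgrading this convergence to graphicality. We need that for $i$ large, $\partial E_i$ is a normal graph over a geodesic sphere $\partial B_R(p_i)$ in $Z_i$ with $C^1$ norm tending to $0$. I would first pass, as in Section~\ref{sec:small volumes}, to smooth approximants $g_i$ via Proposition~\ref{rmk:alex}\eqref{rmk: Alex approximants}. By Anderson's $C^{1,\alpha}$ compactness (Remark~\ref{rmk:locally asymptotic bdd geometry}), the metrics then converge in $C^{1,\alpha}$ on a fixed neighbourhood of the ball, in the sense of Definition~\ref{def:flat convergence}. Next I would use uniform regularity of almost-minimizing boundaries (constant mean curvature with uniformly bounded mean curvature, via Allard-type $\varepsilon$-regularity and $C^{1,\alpha}$ metrics) to get $C^{1,\beta}$ convergence of $\partial E_i$ to the round sphere $\partial F$. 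I expect the difficulty to be exactly this uniform $\varepsilon$-regularity in varying $C^{1,\alpha}$ metrics, and the matching of centers $p_i$ with the centre of $F$. This is the same mechanism used in \cite[Lemma~3.2]{nardulli2014isoperimetric}, so I would adapt that proof.

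Once graphicality holds, since $Z_i$ is Cartan--Hadamard, $\exp_{p_i}$ identifies it with $(\R^n,\tilde g_i)$. Lemma~\ref{lem: isoperimetric inequality for normal graphs} with the fixed radius $R$ gives a threshold $\epsilon(n,\underline k,\bar k,R)$ independent of $i$, so for $i$ large $P(E_i)\ge \ipm{\bar k}(v_i)$. This contradicts the choice of $E_i$. Hence the inequality persists past $v^\ast$, so $v^\ast=\infty$, which proves the proposition.
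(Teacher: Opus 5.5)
Your proposal follows essentially the same route as the paper's proof: a continuity argument in the volume started from Proposition~\ref{prop: reduction assuming existence}, then generalized compactness plus strict subadditivity to obtain a single limit piece with perimeter equal to the model profile at the critical volume, then Assumption~\ref{assump:equality case} to identify that piece as a model ball, then an Allard-type regularity step plus a comparison of $\Psi_i(\partial F_\infty)$ with $g$-geodesic spheres of fixed radius (the paper's Claim~2) to make the $E_i$ small $C^1$ normal graphs, and finally Lemma~\ref{lem: isoperimetric inequality for normal graphs} for the contradiction. The one difference is bookkeeping: you run the induction uniformly over the whole class of spaces, with varying $Z_i$, so the inductive hypothesis holds on the limit spaces by definition; the paper instead fixes $Z$, takes limits at infinity of $Z$ itself, and transfers the inequality to them via $I_{Z_j}\geq I_Z$ from \cite[Proposition 2.19]{antonelli2022isoperimetric}, which lets it use compactness and regularity results in a single ambient space rather than the varying-space versions your route needs.
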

\begin{proof}
As in Section \ref{sec:small volumes} we can assume $g_Z \in C^\infty$. Let $v_0>0$ be the constant from Proposition~\ref{prop: reduction assuming existence}, and define $S\subseteq \R_+:=[0,+\infty)$ as the set of volumes $v$ for which the conjecture holds true. By Proposition~\ref{prop: reduction assuming existence}, we have $[0,v_0]\subseteq S$. We will show that $S$ is both closed and open, which implies, by connectedness, $S = \R_+$.

The fact that $S$ is closed follows from the continuity of $\ip{Z}{g_Z}$ and $\ipm{\bar k}$, see \cite[Theorem 2]{NardulliFlores2020} for the continuity of $\ip{Z}{g_Z}$.

We now prove that $S$ is open by means of contradiction. Let $v_*\geq v_0$ be the supremum among all $s>0$ such that $[0,s]\subseteq S$, and assume that $v_* < +\infty$. Take $v_i \searrow v_*$ with $v_i\notin S$, i.e., for any isoperimetric region $E_i$ of volume $v_i$, we have
\begin{equation}\label{eqn: contradiction hyp reversed inequality}
P_{g_Z}(E_i) <\ipm{\bar k}(v_i)\text{ for each }i.
\end{equation}

By the definition of $v_*$ and the continuity of both isoperimetric profiles, we must have
\begin{equation}\label{eqn: isoperimetric equality case}
\ip{Z}{g_Z}(v_*) = \ipm{\bar k}(v_*).     
\end{equation}

Since $\sup_i P_{g_Z}(E_i) < +\infty$, $\sup_i \cH_{d_Z}^n(E_i) = \sup_i v_i < +\infty$, and, by Remark \ref{rmk:locally asymptotic bdd geometry}, $(Z,g_Z)$ is of $C^{1,\alpha}$-locally asymptotic bounded geometry, we may apply \cite[Theorem 1, pg 64]{MunozFloresnardulli2020} (see also \cite[Theorem 1.2]{antonelli2022isoperimetric}). This yields a nondecreasing, possibly unbounded, sequence $\{N_i\}\subset\mathbb{N}_{\geq 1}$, points $p_{i,j} \in Z$, with $1 \leq j \leq N_i$ for any $i$, and pairwise disjoint subsets $E_{i,j} \subset E_i$ such that
\begin{itemize}
    \item $\lim_i d_Z(p_{i,j},p_{i,l}) = + \infty$, for any $j \neq l < \bar N + 1$, where $\bar N := \lim_i N_i \in \mathbb{N} \cup \{+\infty\}$;
    \item for every $1 \leq j < \bar N + 1$, the sequence $(Z,g_Z,\mathcal{H}^n_{d_Z},p_{i,j})$ converges in the $C^{1,\alpha}$-topology to a $C^{1,\alpha}$ pointed Riemannian manifold $(Z_j,g_j,\mathcal{H}^n_{d_j},p_j)$ as $i \to + \infty$;
    \item for every $1 \leq j < \bar N + 1$, there exist isoperimetric sets $F_j \subset Z_j$ such that $(E_{i,j},g_Z,p_{i,j}) \to (F_j,g_j,p_j)$ in the pointed $C^{1,\alpha}$-flat topology\footnote{Although the authors in \cite{MunozFloresnardulli2020} assumes $C^0$-locally asymptotic bounded geometry and therefore only obtains $C^0$-flat convergence, their proof readily gives $C^{1,\alpha}$-flat convergence if one assumes $C^{1,\alpha}$-locally asymptotic bounded geometry.}, cf. Definition \ref{def:flat convergence}, and there holds
    \begin{equation}\label{eqn: volume decomposition}
    v_* = \sum_{j=1}^{\bar N} \mathcal{H}^n_{d_j}(F_j)
    \end{equation}
    and
    \begin{equation}\label{eqn: perimeter decomposition}
    P_{g_j}(F_j) = \lim_i P_{g_Z}(E_{i,j}).
    \end{equation}
\end{itemize}

Since the sets $E_i$ and $F_j$ are isoperimetric regions, they must be bounded (cf. \cite[Theorem 3]{nardulli2014generalized}). This and by definition of pointed $C^{1,\alpha}$-flat convergence, for each $1 \leq j \leq \bar N$ and any $R_j > 0$, there exists a domain $\Omega_{R_j}$  satisfying $F_j \subset B_{R_j}(p_j) \subset \Omega_{R_j} \subset Z_j$, an integer $\nu_{R_j} \in \N$, and $C^{2,\alpha}$ embeddings $\Psi_{i,R_j}: \Omega_{R_j} \to Z$ such that, for all $i \geq \nu_{R_j}$, we have that $E_{i,j} \subset B_{R_j}(p_{i,j}) \subset \Psi_{i,R_j}(\Omega_{R_j})$.

We now claim that the decomposition $\{E_{i,j}\}_j$ of $E_i$ only has one connected component at the limit. More precisely, the claim—which we prove at the end—is {the} following:

\medskip 

\textit{\underline{Claim 1:}} $\bar N = 1$.

Since, by claim 1, $\bar N = 1$, denote $(Z_{\infty},g_{\infty}) := (Z_1,g_1)$, $F_{\infty} := F_1$, $E_{i} := E_{i,1}$, and $p_i := p_{i,1}$. Note that
\begin{equation}\label{eqn: smoothness of F_infty}
P_{g_{\infty}}(F_\infty) \overset{\eqref{eqn: perimeter decomposition}}{=}  \lim_{i}P_{g_Z}(E_i) = \ip{Z}{g_Z}(v_*) \overset{\eqref{eqn: isoperimetric equality case}}{=} \ipm{\bar k}(v_*).
\end{equation}

By $\vol_{g_\infty}(F_\infty) = v_*$, \eqref{eqn: smoothness of F_infty}, and Assumption \ref{assump:equality case}, $F_{\infty}$ is smooth and hence we can apply \cite[Theorem 1]{nardulli2018regularity}\footnote{The assumption of $C^4$-regularity of the metric in \cite{nardulli2018regularity} arises solely from the use of Allard's regularity theorem. It can be substantially weakened using the extension of Allard's theorem to Alexandrov spaces established in \cite{agnoletto2025allard}; see also the discussion therein.} with $T_i = \Psi_{i,R_1}^{-1}(E_i), (M,g) =(Z_\infty,{\Psi_{i,R_1}^{-1}}_*g_Z)$, and $B = F_{\infty}$, to obtain that $\Psi_{i,R_1}^{-1}(E_i)$ are normal $C^{2,\alpha}$-graphs of a function $u_i$. So, defining $\tilde u_i := u_i \circ \Psi_{i,R_1}^{-1}$, we have that $E_i$ is a normal $C^{2,\alpha}$-graph of $\tilde u_i$ over $\partial \Psi_{i,R_1}(F_\infty)$. 

Using that $E_i$ is a normal graph over $\partial \Psi_{i,R_1}(F_\infty)$ and, by Assumption \ref{assump:equality case}, $F_\infty$ is isometric to geodesic ball in $\mathbb{M}^n_{\bar k}$, we will show at the end the following claim.
\medskip 

\textit{\underline{Claim 2:}} $E_i$ is the normal graph of $w_i\in C^1(\partial B^{g_Z}_r(p_i))$ with $\|w_i\|_{C^1} \to 0$ as $i\to\infty$ for $p_i\in M$ and $r>0$.

Next, we may argue exactly as in the end of Section \ref{sec:small volumes} with $\tilde E_i := (\exp_p)^{-1}(E_i)$ and $\tilde g_i = (\exp_{p_i}^{-1})_*g_Z$ to obtain
\begin{equation*}
    P_{g_Z}(E_i) = P_{\tilde g_i}(\tilde E_i) \geq \ipm{\bar k}(\vol_{\tilde g_i}(\tilde E_i)) = \ipm{\bar k}(\vol_{g_Z}(E_i)),
\end{equation*}
which contradicts \eqref{eqn: contradiction hyp reversed inequality}. This contradiction shows that $S$ is open, and completes the proof. $\qed$

\vspace{2cm}

\textit{\underline{Proof of claim 1:}} Since the sets $E_{i,j} \subset E_i$ are pairwise disjoint and satisfy $\cup_{j=1}^{\bar N} E_{i,j} = E_i$ we have
\begin{equation}\label{eqn: sum perimeter F_j}
\sum_{j=1}^{\bar N} P_{g_j}(F_j) \overset{\eqref{eqn: perimeter decomposition}}{=} \sum_{j=1}^{\bar N} \lim_i P_{g_Z}(E_{i,j}) = \lim_i P_{g_Z}(E_i) = \ip{Z}{g_Z}(v_*) \overset{\eqref{eqn: isoperimetric equality case}}{=} \ipm{\bar k}(v_*).
\end{equation}
Furthermore, if $\bar N > 1$, then, by \eqref{eqn: volume decomposition}, we have $v_j < v_*$ for every $1 \leq j \leq \bar N$. Hence, by the definition of $v_*$ and \cite[Proposition 2.19, eq. (2.17)]{antonelli2022isoperimetric},
\begin{equation}\label{eqn: perimeter F_j small volume}
P_{g_j}(F_j) \geq \ipm{\bar k}(v_j).
\end{equation}
On the other hand, if $\bar N = 1$, then $E_{i,1}=E_i$ and, by \eqref{eqn: volume decomposition}, $v_1=v_*$. Therefore,
\begin{equation}\label{eqn: perimeter F_j critical volume}
\begin{aligned}
    P_{g_1}(F_1) \overset{\eqref{eqn: perimeter decomposition}}{=} \lim_i P_{g_Z}(E_{i,1}) = \lim_i P_{g_Z}(E_i) &= \ip{Z}{g_Z}(v_*) \\
    \overset{\eqref{eqn: isoperimetric equality case}}&{=} \ipm{\bar k}(v_*) = \ipm{\bar k}(v_1).
\end{aligned}
\end{equation}

By Equations \eqref{eqn: perimeter F_j small volume} and \eqref{eqn: perimeter F_j critical volume}, we obtain, for every $1 \leq j \leq \bar N$ and for every $\bar N \in \mathbb{N} \cup \{+\infty\}$, that
\begin{equation}\label{eqn: perimeter F_j all volumes}
P_{g_j}(F_j) \geq \ipm{\bar k}(v_j).
\end{equation}
Moreover, by Lemma \ref{lem: isop profile model spaces subadditive}, we have
\begin{equation*}
\begin{aligned}
    \sum_{j=1}^{\bar N} \ipm{\bar k}(v_j) \geq \ipm{\bar k} \left(\sum_{j=1}^{\bar N} v_j\right) \overset{\eqref{eqn: volume decomposition}}&{=} \ipm{\bar k}(v_*) \\
    \overset{\eqref{eqn: sum perimeter F_j}}&{=} \sum_{j=1}^{\bar N} P_{g_j}(F_j) 
    \overset{\eqref{eqn: perimeter F_j all volumes}}{\geq} \sum_{j=1}^{\bar N} \ipm{\bar k}(v_j).
\end{aligned}
\end{equation*}
Therefore,
\begin{equation*}
\sum_{j=1}^{\bar N} \ipm{\bar k}(v_j) = \ipm{\bar k} \left(\sum_{j=1}^{\bar N} v_j\right).
\end{equation*}

Since $v_j > 0$ and, by Lemma \ref{lem: isop profile model spaces subadditive}, the function $t \mapsto \ipm{\bar k}(t)$ is {strictly} subadditive, the last equality implies that $\bar N=1$ and this proves the claim.

\vspace{1.5cm}

\underline{\textit{Proof of claim 2:}}
To simplify notation, let $F_i := \Psi_{i,R_1}(F_\infty)$, $g:=g_Z$, $g_i := {\Psi_{i,R_1}} _* g_\infty$, and $\mathcal{I}_i:({B_r^{\delta_{\bar k}}(0)},\delta_{\bar k})\longrightarrow ({F_i}, g_i)$ be an isometry, i.e., $\mathcal{I}_i^*g_i=\delta_{\bar k}$. Set $p_i:=\mathcal{I}_i(0)$. 

\medskip

\noindent$\rightarrow$ \textit{Step 1: \(\partial E_i\) is close
to \(\partial B_r^g(p_i)\) both in position and in tangent plane.} 

Up to choosing $R_1$ big enough, we have that
\begin{equation*}
\varepsilon_i
:=
\sup_{B_{2r}^g(p_i)}
\| g_i - g\|_{C^{1,\alpha}}
\longrightarrow 0.
\end{equation*}

Let $G_i:\partial F_i\longrightarrow\partial E_i$ be the normal-graph parametrization of \(\partial E_i\) over \(\partial F_i\), namely,
\begin{equation*}
G_i(x)
=
\exp_x^{g_i}
\left(
\tilde u_i(x)\nu_{\partial F_i}^{g_i}(x)
\right).
\end{equation*}

Since $\|\tilde u_i\|_{C^{2,\alpha}} \to 0$ as $i$ grows (by \cite{nardulli2018regularity}), it is easy to see that
\begin{equation*}
\eta_i
:=
\sup_{x\in\partial F_i}
\left[
\frac{1}{r}d_g\bigl(G_i(x),x\bigr)
+
\left|
\nu_{\partial E_i}^g\bigl(G_i(x)\bigr)
-
P_{x,G_i(x)}^g\nu_{\partial F_i}^g(x)
\right|_g
\right]
\longrightarrow 0,
\end{equation*}
where \(P_{x,y}^g:T_xM\to T_yM\) denotes \(g\)-parallel transport along the unique
\(g\)-geodesic joining \(x\) to \(y\) for any $x,y\in\partial F_i$.

For \(v\in T_{p_i}M\) with $|v|_g=1$ and $0\leq t\leq r$, define
\begin{equation*}
A_i(v):=\frac{v}{|v|_{g_i}}, \quad
\gamma_{i,v}(t)
:=
\exp_{p_i}^{g_i}\bigl(tA_i(v)\bigr),
\quad
\gamma_v(t)
:=
\exp_{p_i}^g(tv).
\end{equation*}

Since \(d\mathcal{I}_i(0)\) maps the \(\delta_{\bar k}\)-unit sphere onto \(\{v\in T_{p_i}M, |v|_{g_i}=1\}\), every vector \(A_i(v)\) is of the form $A_i(v)=d\mathcal{I}_i(0)[\theta]$ for some $|\theta|_{\delta_{\bar k}} = 1$. The radial curve {$t\longmapsto\mathcal{I}_i(\exp_0^{\delta_{\bar k}}(t\theta))$} is a \(g_i\)-geodesic, and hence {$\gamma_{i,v}(t)=\mathcal{I}_i(\exp_0^{\delta_{\bar k}}(t\theta))$}. This in turn implies that
\begin{equation*}
\partial F_i
=
\left\{
\gamma_{i,v}(r):v\in T_{p_i}M, |v|_g=1
\right\}.
\end{equation*}

We now compare \(\gamma_{i,v}\) with \(\gamma_v\). By the formula for the difference of the Levi-Civita connections, we obtain
\begin{equation}\label{eqn: D_i bounded}
\sup_{B_{2r}^g(p_i)}|\nabla^{g_i}-\nabla^g|_g
\leq C\varepsilon_i.
\end{equation}

Moreover,
\begin{equation}\label{eqn: initial velocity}
|A_i(v)-v|_g\leq C\varepsilon_i.
\end{equation}

Because \(\gamma_{i,v}\) is a \(g_i\)-geodesic, it satisfies
\begin{equation}\label{eqn: geodesic eqn}
\nabla_{\gamma_{i,v}'}^g\gamma_{i,v}'
=
-(\nabla^{g_i}-\nabla^g)\bigl(\gamma_{i,v}',\gamma_{i,v}'\bigr).
\end{equation}

Thus, when viewed in the metric \(g\), the curve \(\gamma_{i,v}\) has acceleration
bounded by \(C\varepsilon_i\) (by \eqref{eqn: D_i bounded} and \eqref{eqn: geodesic eqn}), while its initial velocity differs from that of
\(\gamma_v\) by at most \(C\varepsilon_i\) (by \eqref{eqn: initial velocity}).

\medskip

From \(\underline k \leq \Sec_g\leq \bar k\leq 0\), the continuous dependence for the geodesic equation, and the Jacobi field description of the differential of the
geodesic flow, we can derive the following uniform estimate
\begin{equation}\label{eqn: uniform in p_i}
    \sup_{\substack{\{v\in T_{p_i}M:|v|_g=1\}\\0\leq t\leq r}}
\left(
d_g\bigl(\gamma_{i,v}(t),\gamma_v(t)\bigr)
+
\left|
\gamma_{i,v}'(t)
-
P_{\gamma_v(t),\gamma_{i,v}(t)}^g\gamma_v'(t)
\right|_g
\right)
\leq C\varepsilon_i .  
\end{equation}

For the sake of readability and since the proof of \eqref{eqn: uniform in p_i} is very technical, we prove it at the end.

Here and below, vectors at nearby points are compared by \(g\)-parallel
transport along the connecting geodesic. In particular, notice that
\(\gamma_{i,v}(r)\in\partial F_i\),
\(\gamma_v(r)\in\partial B_r^g(p_i)\), and $d_g(\gamma_{i,v}(r),\gamma_v(r))\leq C\varepsilon_i$.

{Through the isometry \(I_i\), the curve \(\gamma_{i,v}\) corresponds to a radial geodesic in the constant-curvature space form \((\mathbb{R}^n,\delta_{\bar k})\). By the Gauss lemma, \(\gamma_{i,v}'(r)\) is the outward
\(g_i\)-unit normal to \(\partial F_i\).}

Consequently, for
\(W\in T_{\gamma_{i,v}(r)}\partial F_i\),
\[
\left|g\bigl(\gamma_{i,v}'(r),W\bigr)\right|
=
\left|(g-g_i)\bigl(\gamma_{i,v}'(r),W\bigr)\right|
\leq C\varepsilon_i|W|_g .
\]

This and \eqref{eqn: uniform in p_i} imply
\begin{equation}\label{eqn: normal to parallel transport}
\left|
\nu_{\partial F_i}^g\bigl(\gamma_{i,v}(r)\bigr)
-
P_{\gamma_v(r),\gamma_{i,v}(r)}^g
\nu_{\partial B_r^g(p_i)}^g\bigl(\gamma_v(r)\bigr)
\right|_g
\leq C\varepsilon_i .
\end{equation}

Since \(v\mapsto\gamma_{i,v}(r)\) and \(G_i\) are diffeomorphisms onto their
images, \(v\mapsto G_i(\gamma_{i,v}(r))\) parametrizes \(\partial E_i\).
The definition of \(\eta_i\), the triangle inequality, and \eqref{eqn: uniform in p_i}, ensure the position estimate, namely,
\begin{equation}
d_g\bigl(G_i(\gamma_{i,v}(r)),\gamma_v(r)\bigr)
\leq C(\varepsilon_i+\eta_i),
\label{eqn: position estimate}
\end{equation}

The triangle inequality, $\|\tilde u\|_{C^{2,\alpha}}\to 0$, \eqref{eqn: normal to parallel transport}, and the same Jacobi field estimates to control the change from parallel transport along the two short connecting geodesics, guarantee the tangent plane estimate, namely,
\begin{equation}
   \left|
\nu_{\partial E_i}^g\bigl(G_i(\gamma_{i,v}(r))\bigr)
-
P_{\gamma_v(r),G_i(\gamma_{i,v}(r))}^g
\nu_{\partial B_r^g(p_i)}^g\bigl(\gamma_v(r)\bigr)
\right|_g
\leq C(\varepsilon_i+\eta_i).
\label{eqn: tangent estimate} 
\end{equation}

\medskip

\noindent$\rightarrow$ \textit{Step 2: A diffeomorphism between $\partial E_i$ and $\partial B^g_r(p_i)$.}

Since $(Z,g)$ is Cartan-Hadamard, \(\exp_{p_i}^g:T_{p_i}M\to M\) is a global diffeomorphism and so is
\begin{equation*}
\begin{aligned}
\mathcal{T}_i:
\partial B_r^g(p_i)\times(-r,\infty)
&\longrightarrow
M\setminus\{p_i\},\\
\mathcal{T}_i(x,s)
&:=
\exp_{x}^g
\left(
s\nu_{\partial B_r^g(p_i)}^g(x)
\right)
\end{aligned}
\end{equation*}

In particular, \(\partial B_r^g(p_i)\) admits a tubular neighborhood of uniform width, say $r$, and, by Step 1 (cf. \eqref{eqn: position estimate}), \(\partial E_i\) is contained in this tubular neighborhood. Let
\begin{equation*}
\pi_i:
\mathcal{T}_i\left(
\partial B_r^g(p_i)\times\left(-\frac r2,\frac r2\right)
\right)
\longrightarrow
\partial B_r^g(p_i)
\end{equation*}
be the corresponding normal projection. In these coordinates, we have $\ker D\pi_i
=
\operatorname{span}\{\partial_s\}$, where \(\partial_s\) is the radial unit vector. By \eqref{eqn: tangent estimate} in Step 1, we derive
\begin{equation}\label{eqn: normals}
g\bigl(\nu_{\partial E_i}^g,\partial_s\bigr)
\geq
1-C(\varepsilon_i+\eta_i).
\end{equation}

Therefore, $\pi_i|_{\partial E_i}:
\partial E_i\longrightarrow\partial B_r^g(p_i)$ is a local diffeomorphism. It remains to show that this local diffeomorphism is globally one-to-one. Consider $\Theta_i:\{T_{p_i}M:|v|_g=1\}\longrightarrow \{T_{p_i}M:|v|_g=1\}$ defined by
\begin{equation*}
\Theta_i(v)
:=
\frac{1}{r}
\left(\exp_{p_i}^g\right)^{-1}
\left(
\pi_i\bigl(G_i\bigl(\gamma_{i,v}(r)\bigr)\bigr)
\right).
\end{equation*}

The position estimate \eqref{eqn: position estimate} implies that $
\sup_{v\in \{T_{p_i}M:|v|_g=1\}}|\Theta_i(v)-v|_g
\longrightarrow 0$. Therefore, for large \(i\), \(\Theta_i\) is homotopic to the identity through $H_i(t,v) := |V|/|V|_g$ for $V:= {(1-t)v+t\Theta_i(v)}$.

It follows that \(\Theta_i\), and hence \(\pi_i|_{\partial E_i}\), has degree one. Since
\(\pi_i|_{\partial E_i}\) is a local diffeomorphism between compact connected
spheres, it is a covering map. Its degree is one, so it has only one sheet and
is therefore a diffeomorphism.

\medskip
\noindent $\rightarrow$ \textit{Step 3: Identifying the function whose normal graph is $\partial E_i$.}

We can finally define $w_i:\partial B_r^g(p_i)\longrightarrow\R$ by
\begin{equation*}
\left(
\pi_i|_{\partial E_i}
\right)^{-1}(x)
=
\exp_x^g
\left(
w_i(x)\nu_{\partial B_r^g(p_i)}^g(x)
\right).
\end{equation*}

It is then clear that $\partial E_i$ is given by the normal graph of $w_i$ over $\partial B^g_r(p_i)$. By the position estimate \eqref{eqn: position estimate}, we readily get
\begin{equation*}
\|w_i\|_{C^0(\partial B_r^g(p_i))}
\leq
C(\varepsilon_i+\eta_i).
\end{equation*}

In the coordinates given by \(\mathcal{T}_i\), the metric has the form $\mathcal{T}_i^*g=ds^2+h_{i,s}$,
where \(h_{i,s}\) is the metric induced on the parallel hypersurface to $\partial B^g_r(p_i)$ at signed
distance \(s\). The unit normal to the graph of \(w_i\) is
\begin{equation*}
\frac{
\partial_s-\nabla^{h_{i,w_i}}w_i
}{
\sqrt{
1+
|\nabla^{h_{i,w_i}}w_i|_{h_{i,w_i}}^2
}
}.
\end{equation*}

We notice that $\underline k\leq \Sec_g\leq\bar k \leq0$,
\(|s|\leq r/2\), and classical Jacobi field estimates, imply that \(h_{i,s}\) and
\(h_{i,0}\) are uniformly equivalent. With this and since, by \eqref{eqn: normals}, this normal differs from \(\partial_s\) by at most
\(C(\varepsilon_i+\eta_i)\), we conclude that $$\|\nabla^{\partial B_r^g(p_i)}w_i\|_{C^0(\partial B_r^g(p_i))} \leq C(\varepsilon_i+\eta_i).$$ 

This finally finishes the proof of claim 2.

\vspace{1.5cm}

\underline{\textit{Proof of \eqref{eqn: uniform in p_i}:}} Fix $i$, $v\in T_{p_i}M$, $|v|_g=1$, and we will work along the reference geodesic $\gamma_v$.

\medskip

\noindent $\rightarrow$ \noindent \textit{Step 1: Everything in the same $g$-connection.} 

Since $(Z,g)$ is Cartan-Hadamard, $\exp_q^g$ is a global diffeomorphism for
every $q\in M$, so $\gamma_v$ has no conjugate points and the difference vector
\[
y(t):=\bigl(\exp_{\gamma_v(t)}^g\bigr)^{-1}\bigl(\gamma_{i,v}(t)\bigr)
\ \in\ T_{\gamma_v(t)}M, \qquad 0\le t\le r,
\]
is well defined, with $|y(t)|_g=d_g\bigl(\gamma_v(t),\gamma_{i,v}(t)\bigr)$.
Fix a $g$-parallel orthonormal frame $E_1(t),\dots,E_n(t)$ along $\gamma_v$
and identify $y(t)$ with its coordinate vector in $\mathbb R^n$; write
$\dot y,\ddot y$ for the covariant derivative of $y$ along $\gamma_v$, read
in this frame, so that covariant differentiation along $\gamma_v$ becomes
ordinary differentiation of coordinates. By definition $y(0)=0$,
 differentiating $\exp^g_{\gamma_v(t)}(y(t))=\gamma_{i,v}(t)$ at $t=0$,and \eqref{eqn: initial velocity}, give
\[
\dot y(0)=\gamma_{i,v}'(0)-\gamma_v'(0)=A_i(v)-v,
\qquad
|\dot y(0)|_g\leq C\varepsilon_i.
\]

\medskip

\noindent $\rightarrow$ \noindent \textit{Step 2: The ODE for $y$: a forced Jacobi equation.} 

Differentiate $\exp_{\gamma_v(t)}^g(y(t))=\gamma_{i,v}(t)$ twice in $t$. For the sake of readability, for $a\in\{1,\ldots, n\}$, set
\[
H(t,z):=\exp_{\gamma_v(t)}^g\bigl(z^aE_a(t)\bigr), \qquad
V_0:=H_*\partial_t = dH(\partial_t), \qquad V_a:=H_*\partial_{z^a} = dH(\partial_{z^a}),
\]
so that $\gamma_{i,v}(t)=H(t,y(t))$. Since $[\partial_t,\partial_{z^a}]=0$
and $\nabla^g$ is torsion-free, for $z=y(t)$, we get
\begin{equation}\label{eqn: forced jacob 0} 
\nabla_{\gamma_{i,v}'}^g\gamma_{i,v}'
=
\nabla^g_{V_0}V_0 +2\dot y^a\nabla_{V_0}^gV_a+\dot y^a\dot y^b\nabla_{V_a}^gV_b+\ddot y^aV_a,
\end{equation}

At $z=0$, since the differential
of $\exp_{\gamma_v(t)}^g$ at the origin is the identity,  we have $V_a(t,0)=E_a(t)$. It also follows that 
$\nabla_{V_0}^gV_0(t,0)=\nabla_{\gamma_v'}^g\gamma_v'=0$. Because $E_a$ is
parallel, $\nabla_{V_0}^gV_a(t,0)=\nabla_{\partial_t}E_a(t)=0$. This gives that 
\[
\nabla_{\gamma_{i,v}'}^g\gamma_{i,v}' =\dot y^a\dot y^b\nabla_{V_a}^gV_b+\ddot y^aV_a \ \text{ at } \ z=0.
\]

To isolate the coefficient of \(\ddot y\), define
\[
A_t(z):\mathbb R^n\longrightarrow T_{H(t,z)}M,
\qquad
A_t(z)e_a:=V_a(t,z).
\]

Since \((Z,g)\) is Cartan-Hadamard, the exponential map is a global diffeomorphism and thus \(A_t(z)\) is invertible. 

Define $B(t,z,w):=\nabla^g_{V_0}V_0(t,z)+2w^a\nabla^g_{V_0}V_a(t,z)+w^aw^b\nabla^g_{V_a}V_b(t,z)$ and $G(t,z,w):=A_t(z)^{-1}B(t,z,w)$. Since $\ddot y^aV_a=A_t(y)\ddot y$, equation \eqref{eqn: forced jacob 0} can be rewritten as
\begin{equation}\label{eqn: forced jacobi 1}
\nabla_{\gamma_{i,v}'}^g\gamma_{i,v}'
=
A_t(y)\bigl(\ddot y+G(t,y,\dot y)\bigr)\ \text{ and } \
\ddot y+G(t,y,\dot y)
=
A_t(y)^{-1}\nabla^g_{\gamma_{i,v}^\prime}\gamma_{i,v}^\prime.
\end{equation}

We now compute the linearization of \(G\) at \((z,w)=(0,0)\). First, we have that $B(t,0,0) = \nabla^g_{V_0}V_0(t,0) = 0,$ and hence $G(t,0,0)=0$.

The derivative with respect to $w$ readily gives
\[
\partial_{w^c}G(t,z,w)
=
A_t(z)^{-1}
\left(
2\nabla^g_{V_0}V_c(t,z)
+2w^b\nabla_{V_c}^gV_b(t,z)
\right),
\]
and, at \((z,w)=(0,0)\), since \(V_c(t,0)=E_c(t)\) and \(E_c\) is parallel, we derive
\begin{equation}\label{eqn: DwG at 0}
D_wG(t,0,0)e_c
=
2A_t(0)^{-1}\nabla^g_{V_0}V_c(t,0)
=
0.
\end{equation}

We must also compute the derivative with respect to $z$. We see that $D_zG[e_c]=D_z(A^{-1})[e_c]\,B+A^{-1}D_zB[e_c]$ and, at \((z,w)=(0,0)\), the first term vanishes because \(B(t,0,0)=0\). Thus
\begin{equation}\label{eqn: Dgz}
D_zG(t,0,0)e_c
=
A_t(0)^{-1}
\bigl[\nabla_{V_c}^g\nabla^g_{V_0}V_0\bigr]_{(t,0)}.
\end{equation}

By the curvature commutation identity, we have $\nabla_{V_c}^g\nabla^g_{V_0}V_0-\nabla^g_{V_0}\nabla_{V_c}^gV_0=R^g(V_c,V_0)V_0$ and, since \(\nabla_{V_c}^gV_0=\nabla^g_{V_0}V_c\), we get
\[
\nabla_{V_c}^g\nabla^g_{V_0}V_0
=
\nabla^g_{V_0}\nabla^g_{V_0}V_c+R^g(V_c,V_0)V_0.
\]

At \(z=0\), $V_c(t,0)=E_c(t)$, $V_0(t,0)=\gamma'(t)$, and since \(E_c\) is parallel, we see that $\nabla^g_{V_0}\nabla^g_{V_0}V_c(t,0)=0$ and thus $\nabla_{V_c}^g\nabla^g_{V_0}V_0(t,0) = R^g\bigl(E_c(t),\gamma'(t)\bigr)\gamma'(t)$. Plugging this into \eqref{eqn: Dgz}, we get
\[
D_zG(t,0,0)e_c
=
A_t(0)^{-1}
R^g\bigl(E_c(t),\gamma'(t)\bigr)\gamma'(t).
\]

Define $\mathcal R(t):\mathbb R^n\longrightarrow\mathbb R^n$ by $\mathcal R(t)u:=A_t(0)^{-1}\left(R^g(A_t(0)u,\gamma'(t))\gamma'(t)\right)$. So, by the last displayed equation, we have 
\begin{equation}\label{eqn: DzG at 0}
D_zG(t,0,0)=\mathcal R(t).
\end{equation}

By Taylor expansion, \eqref{eqn: DwG at 0}, and \eqref{eqn: DzG at 0}, we then derive
\[
G(t,z,w)
=
\mathcal R(t)z-\mathcal N(t,z,w),
\]
where $\mathcal N(t,0,0)=0$ and $D_{(z,w)}\mathcal N(t,0,0)=0$. Define $\mathcal F_i(t) := A_t(y(t))^{-1}\nabla^g_{\gamma_{i,v}'}\gamma_{i,v}'$. By the last displayed equation and \eqref{eqn: forced jacobi 1}, we finally derive the ODE—the forces Jacobi equation—for $y$: 
\begin{equation}
\label{eqn:forced-jacobi-2}
\ddot y(t)+\mathcal R(t)y(t)
=
\mathcal N\bigl(t,y(t),\dot y(t)\bigr)
+\mathcal F_i(t),
\qquad 0\le t\le r.
\end{equation}

It remains to estimate the two terms on the right-hand side of
\eqref{eqn:forced-jacobi-2}. Since \(\mathcal N\) and its first derivative vanish
at \((z,w)=(0,0)\), Taylor's formula with integral remainder gives, with 
\(\xi=(z,w)\in \overline{B_1}\times \overline{B_1}\) and $t\in[0,r]$,
\[
{
|\mathcal N(t,z,w)|
\le
C\bigl(|z|+|w|\bigr).
}
\]

Finally, since \(A_t(y(t))^{-1}\) is uniformly bounded on $\overline{B_1}$, by \eqref{eqn: D_i bounded}, we get
\begin{equation}\label{eqn: bound cal Fi}
\sup_{0\le t\le r}|\mathcal F_i(t)|_g = \sup_{0\le t\le r}\left|A_t(y(t))^{-1}\left(\nabla^g_{\gamma_{i,v}'} - \nabla^{g_i}_{\gamma_{i,v}'}\right)\gamma_{i,v}'\right|_g
\le C\varepsilon_i.
\end{equation}

\medskip

\noindent $\rightarrow$ \noindent \textit{Step 3: A first order ODE.} 

Set $Z(t):=(y(t),\dot y(t))\in\mathbb R^{2n}$, so that
\eqref{eqn:forced-jacobi-2} becomes the first-order linear system
\begin{equation}\label{eqn: transition matrix 715}
\dot Z(t)=\mathcal A(t)Z(t)+\bigl(0,\ \mathcal N(t)+\mathcal F_i(t)\bigr),
\;\;
\mathcal A(t)=\begin{pmatrix}0&I_n\\-\mathcal R(t)&0\end{pmatrix},
\;\;
\|\mathcal A(t)\|\leq1+\rho .
\end{equation}

This is the variational equation of
\cite[Chapter V, Theorem 3.1]{hartman2002ordinary}. 
Let $\Phi(t,s)$ be the state-transition matrix of the homogeneous system $\dot Z=\mathcal A(t)Z$, i.e., $\partial_t\Phi(t,s)=\mathcal A(t)\Phi(t,s), \Phi(s,s)=I$; a column of $\Phi(t,0)$ is $(J(t),\dot J(t))$ for
the Jacobi field $J$ along $\gamma_v$ with the corresponding initial data. So, bounding $\|\Phi\|$ amounts to bounding Jacobi fields on $[0,r]$. Since $\Sec_g\leq 0$, for any
Jacobi field $J$ along the unit-speed geodesic $\gamma_v$,
\[
\frac{d^2}{dt^2}|J|^2
=2|\dot J|^2-2\Sec_g(J,\gamma_v')\bigl(|J|^2-\langle J,\gamma_v'\rangle^2\bigr)
\ \geq\ 2|\dot J|^2\ \geq\ 0;
\]
hence $|J|^2$ is convex along $\gamma_v$,
$\gamma_v$ has no conjugate points, and
$\Phi$ cannot blow up on $[0,r]$. On the other hand, by Rauch comparison, we get
\begin{equation*}
\begin{aligned}
|J(t)|\leq\cosh(\sqrt{-\underline k}t)|J(0)|+\frac{\sinh(\sqrt{-\underline k}t)}{\sqrt{-\underline k}}|\dot J(0)|,\\
|\dot J(t)|\leq \sqrt{-\underline k}\sinh(\sqrt{-\underline k}t)|J(0)|+\cosh(\sqrt{-\underline k}t)|\dot J(0)| .    
\end{aligned}
\end{equation*}

Together, these two bounds give a constant
$K=K(n,r,\underline k,\bar k)<\infty$, depending only on $n$, $r$,
$\underline k$, $\bar k$, in particular not on $i$ or on $v$, such
that
\[
\|\Phi(t,s)\|\leq K \qquad\text{for all } 0\leq s\leq t\leq r .
\]

\medskip

\noindent $\rightarrow$ \noindent \textit{Step 4: Duhamel's Principle and Gr\"onwall's Inequality.}

Variation of constants on \eqref{eqn: transition matrix 715}, with
$Z(0)=(0,\,A_i(v)-v)$, gives
\[
Z(t)=\Phi(t,0)Z(0)+\int_0^t\Phi(t,s)\bigl(0,\ \mathcal N(s)+\mathcal F_i(s)\bigr)\,ds .
\]

As long as $|Z(s)|\leq1$ for $0\leq s\leq t$, taking norms and using
$\|\Phi\|\leq K$, $|Z(0)|\leq C\varepsilon_i$ (by
\eqref{eqn: initial velocity}), $|\mathcal F_i(s)|\leq C\varepsilon_i$ (by \eqref{eqn: bound cal Fi}), and
$|\mathcal N(s)|\leq C|Z(s)|$ gives
\[
|Z(t)|\leq KC(1+r)\varepsilon_i+KC\int_0^t|Z(s)|\,ds ,
\]
and Gr\"onwall's inequality yields
\begin{equation}\label{eqn: Gronwall}
|Z(t)|\leq KC(1+r)e^{KCr}\varepsilon_i=:C'\varepsilon_i,
\qquad 0\leq t\leq r,
\end{equation}
on the interval where $|Z|\leq1$ holds. Since $C'$
does not depend on $i$, once $i$ is large enough that $C'\varepsilon_i<1$
we have $|Z|\leq1$ on $[0,r]$; so the bound above holds on all of $[0,r]$ for
every large $i$.

It remains to translate the bound on $Z=(y,\dot y)$ into the two terms of
\eqref{eqn: uniform in p_i}. By definition $|y(t)|_g=d_g(\gamma_{i,v}(t),\gamma_v(t))$,
which is the first term.

For the second term, fix $t\in[0,r]$ and introduce the connecting variation
\[
  \sigma(s,t)=\exp^{g}_{\gamma_v(t)}\!\big(s\,y(t)\big),\qquad 0\leq s \leq 1,
\]
so that $\sigma(0,t)=\gamma_v(t)$, $\sigma(1,t)=\gamma_{i,v}(t)$, and, for each fixed
$t$, the curve $s\mapsto\sigma(s,t)$ is the unique minimizing $g$-geodesic joining
$\gamma_v(t)$ to $\gamma_{i,v}(t)$ of
length $|y(t)|_g$. Put $S:=\partial_s\sigma$ and $J:=\partial_t\sigma$. Since every
$s$-curve is a geodesic, $S$ is parallel along it, so $|S|_g\equiv|y(t)|_g$, and $J$
is a Jacobi field along it with $J(0)=\gamma_v'(t)$, $J(1)=\gamma_{i,v}'(t)$, and $\nabla_s J(0)=\dot y(t)$.

Let $P_s$ be $g$-parallel transport along $s\mapsto\sigma(s,t)$ from $0$ to $s$, so
$P_1=P^{g}_{\gamma_v(t),\gamma_{i,v}(t)}$ is the operator in
\eqref{eqn: uniform in p_i}, and set
$\widetilde J:=P_s^{-1}J\in T_{\gamma_v(t)}M$. The Jacobi equation becomes 
\[
  \widetilde J''(s)=-P_s^{-1}R\big(J,S\big)S ,
\]
and $\underline k\le\Sec_g\le\bar k\leq 0$, together with $|S|_g=|y(t)|_g$
and the Rauch bound $|J(s)|_g\le C$, gives
$|\widetilde J''(s)|_g\le C\rho\,|y(t)|_g^{2}$. Taylor's formula with integral
remainder across $s\in[0,1]$,
\[
  \widetilde J(1)=\widetilde J(0)+\widetilde J'(0)+E,\qquad
  |E|_g\le C\rho\,|y(t)|_g^{2},
\]
reads $P_1^{-1}\gamma_{i,v}'(t)=\gamma_v'(t)+\dot y(t)+E$; applying the isometry
$P_1$, we get
\[
  \gamma_{i,v}'(t)-P^{g}_{\gamma_v(t),\gamma_{i,v}(t)}\gamma_v'(t)
  =P^{g}_{\gamma_v(t),\gamma_{i,v}(t)}(\dot y(t)+  E).
\]

Since $P^{g}_{\gamma_v(t),\gamma_{i,v}(t)}$ is a linear $g$-isometry the right-hand side has $g$-norm $|\dot y(t)+E|_g$. By the triangle
inequality and the remainder bound $|E|_g\le C\rho\,|y(t)|_g^{2}$,
\[
  \bigl|\gamma_{i,v}'(t)-P^{g}_{\gamma_v(t),\gamma_{i,v}(t)}\gamma_v'(t)\bigr|_g
  \;=\;|\dot y(t)+E|_g
  \;\le\;|\dot y(t)|_g+C\rho\,|y(t)|_g^{2}.
\]

The Gr\"onwall bound (cf. \eqref{eqn: Gronwall}) gives $|y(t)|_g\le|Z(t)|\le C\varepsilon_i$ and
$|\dot y(t)|_g\le|Z(t)|\le C\varepsilon_i$; in particular $|y(t)|_g\le1$ for $i$
large, so $|y(t)|_g^{2}\le|y(t)|_g\le C\varepsilon_i$. Hence
\[
  \bigl|\gamma_{i,v}'(t)-P^{g}_{\gamma_v(t),\gamma_{i,v}(t)}\gamma_v'(t)\bigr|_g
  \;\le\;C\varepsilon_i,
\]
and, adding the first term $d_g(\gamma_{i,v}(t),\gamma_v(t))=|y(t)|_g\le C\varepsilon_i$, we obtain
\[
  d_g\bigl(\gamma_{i,v}(t),\gamma_v(t)\bigr)
  +
  \bigl|\gamma_{i,v}'(t)-P^{g}_{\gamma_v(t),\gamma_{i,v}(t)}\gamma_v'(t)\bigr|_g
  \ \le\ C\varepsilon_i, \qquad 0\le t\le r .
\]

\medskip

\noindent $\rightarrow$ \noindent \textit{Step 5: Uniformity in $v$ and $i$.}

Every constant occurring above $\rho$, $a$, $K$, $C$, $C'$ depends
only on $n$, $r$, and the curvature bounds
$\underline k\leq\Sec_g\leq\bar k\leq0$ on the fixed ball $B_{2r}^g(p_i)$.
None of them depends on the direction $v$, which enters only through the
initial datum $A_i(v)-v$, controlled uniformly on the compact sphere
$\{v\in T_{p_i}M:|v|_g=1\}$ by \eqref{eqn: initial velocity}; and, once $i$
is large enough for the bootstrap step above to apply, none of them depends
on $i$ either. Thus the same constant $C$ works for every unit
vector $v\in T_{p_i}M$ and every $t\in[0,r]$, and taking the supremum gives \eqref{eqn: uniform in p_i}. This finishes the proof. 
\end{proof}

\bibliographystyle{abbrv}
\bibliography{9biblio}

\end{document}